\documentclass[12pt]{amsart}
\usepackage[utf8]{inputenc}
\usepackage[margin=1.5in]{geometry} 
\usepackage{amsmath, amsthm, amssymb, bbm, bm, amsfonts, enumerate, xcolor, hyperref}
\usepackage[colorinlistoftodos]{todonotes}
\usepackage{caption}
\usepackage{mathrsfs}

\hypersetup{colorlinks,
	linkcolor=green!50!black,
	citecolor=blue!70!black,
	urlcolor=red!50!black
}

\def \Fq {{\mathbb{F}_{q}}}

\def \N {{\mathbb N}}
\def \C {{\mathbb C}}
\def \Z {{\mathbb Z}}
\def \Q {{\mathbb Q}}

\def \p {{\mathfrak p}}
\makeatother
\newcommand{\fq}{\mathbb{F}_q}
\newtheorem{theorem}{Theorem}[section]
\newtheorem{definition}[theorem]{Definition}
\newtheorem{lemma}[theorem]{Lemma}

\newtheorem{proposition}[theorem]{Proposition}

\newtheorem{cor}[theorem]{Corollary}
\numberwithin{equation}{section}
	\newcounter{countknownthm}

\title[Primitive Root Conjecture]{On Artin's Primitive Root Conjecture for Function Fields over $\fq$}

\subjclass[2020]{11R58, 11T23}

\keywords{Artin’s conjecture, primitive root, finite fields, function fields}

\author[L. Hochfilzer]{Leonhard Hochfilzer}
\address{L. Hochfilzer,
Mathematisches Institut,
Bunsenstraße 3-5, 37073 Göttingen}
\email{leonhard.hochfilzer@uni-goettingen.de}

\author[E. Waxman]{Ezra Waxman}
\address{E. Waxman,
	University of Haifa, Department of Mathematics, 199 Aba Khoushy Ave., Mt. Carmel, Haifa, 3498838}
\email{ezrawaxman@gmail.com}

\date{June 7, 2023}

\begin{document}

\begin{abstract}
In 1927, E. Artin proposed a conjecture for the natural density of primes $p$ for which $g$ generates $(\mathbb{Z}/p\mathbb{Z})^\times$.  By carefully observing numerical deviations from Artin’s originally predicted asymptotic, Derrick and Emma Lehmer (1957) identified the need for an additional correction factor; leading to a modified conjecture which was eventually proved to be correct by Hooley (1967) under the assumption of the generalised Riemann hypothesis.  An appropriate analogue of Artin's primitive root conjecture may moreover be formulated for an algebraic function field $K$ of $r$ variables over $\fq$.  Relying on a soon to be established theorem of Weil (1948), Bilharz (1937) provided a proof in the particular case that $K$ is a global function field (i.e. $r=1$), which is correct under the assumption that $g \in K$ is a \textit{geometric} element.  Under these same assumptions, Pappalardi and Shparlinski (1995) established a quantitative version of Bilharz's result.  In this paper we build upon these works by both generalizing to function fields in $r$ variables over $\fq$ and removing the assumption that $g \in K$ is geometric; thereby completing a proof of Artin's primitive root conjecture for function fields over $\fq$.  In doing so, we moreover identify an interesting correction factor which emerges when $g$ is not geometric.  A crucial feature of our work is an exponential sum estimate over varieties that we derive from Weil's Theorem.
\end{abstract}

\maketitle

\section{Introduction}
\subsection{Classical Primitive Root Conjecture}
Consider an odd prime $p \in \mathbb{N}$, and recall that the multiplicative group $(\Z/p\Z)^{\times}$ is cyclic of order $p-1$.  We say that $g \in \Q \setminus \{0\}$ is a \textit{primitive root} mod $p$ (denoted $\mathrm{ord}_{p}(g) = p-1$) if the $p-$adic valuation satisfies $v_{p}(g) = 0$, and if the reduction $g$ mod $p$ generates the group $(\Z/p\Z)^{\times}$.  If $g \in \Q^{\times,2}$ is a perfect square or $g = -1$, then there are at most finitely many primes $p$ such that $\mathrm{ord}_{p}(g) = p-1$.  \textit{Artin's primitive root conjecture} states that in all other cases, $g$ is a primitive root mod $p$ for infinitely many primes $p$.

\subsection{Primitive Root Conjecture for  $\mathbb{F}_{q}(t)$}\label{Fq[T] Artin}
Artin first proposed his conjecture in a private conversation with Helmut Hasse on September 12, 1927. Hasse then assigned the problem to his PhD student, Herbert Bilharz, who began working on the problem in 1933.  Shortly afterwards, Erd\H{o}s announced a proof of the conjecture, conditional on the generalized Riemann hypothesis for certain Dedekind zeta functions.  Erd\H{o}s did not, in fact, publish any formal paper on the topic; but the threat was nonetheless sufficient motivation for Bilharz to shift the topic of his dissertation to the analogous problem over global function fields.  For more on this interesting history see \cite[Appendix 2]{cojocaro2003cyclicity}.
\\
\\
The simplest instance of Artin's conjecture in the function field setting is as follows.  Let $\fq$ denote a finite field of $q$ elements and $\fq[t]$ the ring of polynomials with coefficients in $\fq$.  We moreover let $\mathcal{P}_{n} \subset \fq[t]$ denote the subset of prime monic polynomials of degree $n \in \mathbb{N}$.  For $g(t) \in \fq(t)$ and $P \in \mathcal{P}_{n}$ such that $v_{P}(g)=0$, we let $\textnormal{ord}_{P}(g(t))$ denote the order of $g(t)$ in the multiplicative group  $(\fq[t]/(P))^{\times}$, where $(P) \subseteq \fq[t]$ denotes the prime ideal generated by the prime $P$.  In particular, $g(t)$ generates $(\fq[t]/(P))^{\times}$ if and only if $\textnormal{ord}_{P}(g) = q^{n}-1$, in which case we say that $g$ is a \textit{primitive root} mod $P$.\\
\\
Two immediate obstructions prevent $g(t)$ from being a primitive root modulo infinitely many prime $P \in \fq[t]$.  First, note that if $g(t) \in \fq^{\times}$ is a constant in $\fq[t]$, then $\textnormal{ord}_{P}(g(t)) \leq q-1$, and therefore $g(t)$ cannot be a primitive root modulo $P \in \mathcal{P}_{n}$, whenever $n >1$.  Second, suppose $g(t) = h(t)^\ell$ for some prime $\ell \mid q-1$.  Since
\[q^{n}-1 = (q-1)(q^{n-1}+\dots +q+1),\]
we find that $\ell \mid q^n-1$ for any $n \in \N$.  Thus  $\mathrm{ord}_P(g) \leq \frac{q^n-1}{\ell} < q^n-1$, from which it follows that $g(t)$ cannot be a primitive root modulo any prime $P \in \fq[t]$.

In the $\fq[t]$ setting, \textit{Artin's primitive root conjecture} is then the following claim:\\
\\
\textbf{\textit{Primitive Root Conjecture for $\fq(t)$}}: \textit{Suppose $g(t) \in \fq(t) \setminus \fq$ is not an $\ell^{th}$ power, for any prime $\ell \mid q-1$. Then $g(t)$ is a primitive root mod $P$ for infinitely many prime $P \in \fq[t]$}.\\
\\
For certain particular choices of $g(t) \in \fq[t]$, the above conjecture may be proved using elementary methods.  For instance, when $g(t) = t^m + c$, Jensen and Murty \cite{JensenMurty2000} offer a proof that relies only on the theory of Gauss sums.  When $g(t) \in \fq[t]$ is irreducible, Kim and Murty \cite{KimMurty2020, KimMurty2020II} provide a proof that relies only on establishing a zero-free region for relevant $L$-functions.

\subsection{Primitive Root Conjecture for Function Fields}
To formulate Artin's primitive root conjecture over more general function fields, it is appropriate to take a more geometric viewpoint (see Section~\ref{geometric background} for a 
rather self-contained overview of this geometric set-up.)
Let $X$ be a geometrically integral projective variety over $\Fq$ of dimension $r >0$, and write $K = \Fq(X)$ for its function field. Given a point $\mathfrak{p} \in X$, denote by $\mathcal{O}_{\mathfrak{p}} = \mathcal{O}_{X,\mathfrak{p}}$ the stalk of the structure sheaf $\mathcal{O}_{X}$ at $\mathfrak{p}$. Abusing notation, we moreover write $\mathfrak{p} \subset \mathcal{O}_{\mathfrak{p}}$ to denote the unique maximal ideal, and then let $\kappa_{\mathfrak{p}} = \mathcal{O}_{\mathfrak{p}}/\mathfrak{p}$ denote the corresponding residue field. If $\p \in X$ is moreover closed, then $\kappa_{\mathfrak{p}}$ is a finite field extension of the base field of $X$, i.e. $\deg \mathfrak{p} := [\kappa_{\mathfrak{p}} \colon \Fq]$ is finite.\\
\\
Fix $g \in K$, and let $\mathfrak{p}$ denote a closed point of $X$. We say $g$ is \textit{regular} at $\mathfrak{p}$ if $g$ lies in the image of the embedding $\mathcal{O}_{\mathfrak{p}} \hookrightarrow K$. Upon pulling $g$ back under this embedding, we may then consider $g \in \kappa_{\mathfrak{p}}$. We say that $g \in K$ is a \emph{primitive root modulo $\mathfrak{p}$} if $g$ is regular at $\mathfrak{p}$ and if $g$ generates the multiplicative group $\kappa_{\mathfrak{p}}^{\times}$.  In such a case, we moreover refer to $\p$ as an \textit{Artin prime} for $g$.\\
\\
As with the case over $\fq[t]$, it may easily be shown that if $g$ lies in $\fq$ or is an $\ell^{th}$ for some prime $\ell \mid q-1$, then $g$ may be a primitive root modulo $\p$ for at most finitely many closed points $\p \in X$.  In this more general set-up, \textit{Artin's primitive root conjecture} is then the following claim:\\
\\
\textbf{\textit{Primitive Root Conjecture for $K$}}: \textit{Let 
$K = \fq(X)$ denote the function field of some geometrically integral projective variety $X$ over $\fq$.  Suppose moreover that $g \in K \setminus \fq$ is not an $\ell^{th}$ power, for any prime $\ell \mid q-1$. Then for any such $X$, $g$ is a primitive root modulo $\p$, for infinitely many closed points $\p \in X$.}\\
\\
Bilharz ~\cite{bilharz1937primdivisoren} addressed the special case in which $X$ is a geometrically integral projective curve over $\fq$ (i.e. the case in which $K$ is a global function field).  His result was conditional on the Riemann hypothesis for global function fields $-$ a theorem subsequently established by André Weil \cite{weil1948courbes}.  Bilharz's proof contains a gap, however, for cases in which $g \in K$ is not a \textit{geometric element} (see Definition \ref{geometric element}).  Though this oversight has been previously noted, no corrected proof for the case in which $g \in K$ is geometric has thus far appeared in the literature (see ~\cite[pp. 155]{rosen2002number} for a more detailed discussion).\\
\\
In our work, we remove the assumption that $g \in K$ be a geometric element.  Applying an exponential sum estimate derived from Weil's theorem (Proposition \ref{prop:exponential_sum_estimate}) we moreover generalize to projective varieties of arbitrary dimension.  Note that every function field over $\fq$ (i.e. every field extension $K/\fq$ of positive finite transcendence degree) may be recovered as $K = \fq(X)$, where $X$ is some geometrically integral projective variety of dimension $r > 0$ over $\fq$.  We thus in fact complete a proof of the following theorem:
\begin{theorem}
[\textit{\textbf{Primitive Root Conjecture for Function Fields over $\fq$}}]\label{Artin Conjecture}
Let $K$ denote any function field over $\fq$.  Then Artin's primitive root conjecture holds for $K$.
\end{theorem}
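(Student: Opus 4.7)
My approach would be the classical Hooley--Bilharz strategy, adapted to function fields of higher-dimensional varieties over $\fq$ and refined so as to handle the case when $g$ is not a geometric element. Writing $N_g(n)$ for the number of closed points $\p \in X$ of degree $n$ at which $g$ is a primitive root, I would first observe that $g$ generates $\kappa_\p^{\times}$ if and only if $g$ is not an $\ell$-th power in $\kappa_\p^{\times}$ for any prime $\ell \mid q^{n}-1$. An inclusion--exclusion argument then yields
\[
N_g(n) \;=\; \sum_{\substack{d \mid q^n-1 \\ d \text{ squarefree}}} \mu(d)\, M_d(n),
\]
where $M_d(n)$ denotes the number of closed points $\p$ of degree $n$ at which $g$ is regular and lies in $(\kappa_\p^{\times})^d$. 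It suffices to prove $N_g(n) \to \infty$, and in fact I would aim for an asymptotic of the form $N_g(n) \sim \mathfrak{A}(g,q)\, q^{rn}/n$.

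Next, Kummer theory would let me identify ``$g$ is a $d$-th power modulo $\p$'', for squarefree $d$ coprime to $q$, with a splitting condition for $\p$ in the tower $L_d := K(\zeta_d,\, g^{1/d})$: outside a bounded exceptional set, $M_d(n)$ equals the number of degree-$n$ places of $X$ whose Frobenius acts trivially on $L_d/K$. The key analytic input would be Proposition \ref{prop:exponential_sum_estimate}, the exponential sum estimate derived from Weil's theorem, which plays the role of an effective Chebotarev bound over varieties of dimension $r$; I expect it to produce an asymptotic of the shape
\[
M_d(n) \;=\; \frac{q^{rn}}{n\, [L_d : K]} \;+\; O\!\left( \frac{d^{A}\, q^{(r-1/2)n}}{n} \right)
\]
for some fixed absolute $A$. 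I would then split the Möbius sum at a truncation $\Lambda$ with $q^{\Lambda} \ll q^{n/(2A+2)}$: the range $d \leq \Lambda$ produces the expected main term
\[
\frac{q^{rn}}{n}\, \sum_{d \text{ squarefree}} \frac{\mu(d)}{[L_d:K]},
\]
provided absolute convergence holds. The tail $d > \Lambda$ I would handle by a standard sieve argument, bounding $M_d(n)$ through $M_\ell(n)$ for a prime $\ell \mid d$ and exploiting that such $\ell$ must divide $q^n-1$, so that the combined tail contributes $o(q^{rn}/n)$.

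The main obstacle, and the point at which the non-geometric case departs from Bilharz's treatment, will be the Galois-theoretic computation of $[L_d : K]$ and the verification that $\mathfrak{A}(g,q) > 0$. When $g$ is geometric, classical Kummer theory over function fields should produce the expected closed-form expression for $[L_d : K]$, and $\mathfrak{A}(g,q)$ reduces to the function-field analogue of the classical Artin product. In general, however, the constant field $\overline{\fq} \cap L_d$ may strictly contain $\fq$: for instance, if $g$ differs from an element of $\fq$ by an $\ell$-th power in $\overline{\fq}(X)$, extra roots of unity can appear in $L_d$ and collapse the extension relative to the geometric case. The hard part will be to pin down $\overline{\fq} \cap L_d$ as a function of $d$, extract the correction factor alluded to in the abstract, and verify that the modified $\mathfrak{A}(g,q)$ is strictly positive under the stated hypotheses on $g$. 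Once this arithmetic-geometric picture is established, the density argument outlined above would deliver $N_g(n) \to \infty$, completing the proof.
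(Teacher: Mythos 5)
Your overall strategy---inclusion--exclusion over divisors of $q^n-1$, Kummer theory to translate power-residue conditions into splitting conditions in $L_d = K(\zeta_d, g^{1/d})$, and the exponential sum estimate from Proposition~\ref{prop:exponential_sum_estimate} as an effective Chebotarev substitute---is in the same spirit as the paper's, which runs the same argument directly through character sums via Lemma~\ref{lem.characteristic_function} rather than through the fields $L_d$. One small over-complication: the truncation at $\Lambda$ and the ``tail'' sieve are unnecessary here, because the M\"obius sum has only the finitely many terms $d \mid q^n-1$, the divisor bound gives $O_\varepsilon(q^{n\varepsilon})$ of them, and each error is $O(q^{n(r-1/2)})$ uniformly; no absolute-convergence issue arises.

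The genuine gap is in the endgame. You aim for $N_g(n) \sim \mathfrak{A}(g,q)\,q^{rn}/n$ with an $n$-independent constant $\mathfrak{A}(g,q) > 0$, and you conclude that $N_g(n) \to \infty$. Both are false, and this is precisely where the non-geometric case bites. What the argument actually produces is $N_X(g,n) = \rho_g(n)\bigl(\varphi(q^n-1)q^{n(r-1)}/n + O(q^{n(r-1/2+\varepsilon)})\bigr)$, where even for geometric $g$ the factor $\varphi(q^n-1)/q^n$ has no limit as $n\to\infty$, and where for non-geometric $g$ the extra factor $\rho_g(n)$ vanishes for infinitely many $n$. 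Concretely, if $g = \mu b^\ell$ with $\ell \mid q-1$ and $\mu \in \fq^\times$ not an $\ell$-th power, then for every $n$ divisible by $\ell$ one has $g^{(q^n-1)/\ell} = \mu^{(q^n-1)/\ell} = 1$ in each $\kappa_\p^\times$ with $\deg\p = n$, so $N_X(g,n)=0$ for all such $n$. Hence $N_g(n)$ does not tend to infinity and no fixed positive $\mathfrak{A}(g,q)$ exists; the correction factor is inherently $n$-dependent, and the conclusion is about a subsequence of $n$, not a limit. The missing ingredients are precisely the paper's Lemma~\ref{lem.only_finitely_many_non_geometric} (that the set $\mathscr{P}_g$ of primes at which $g$ is non-geometric is finite), Lemma~\ref{rho_description} (that $\rho_g(n) > 0$ iff every $\ell \in \mathscr{P}_g$ dividing $q^n-1$ satisfies $\ell \mid q-1$ and $\ell \nmid n$), and the explicit construction $n = 1 + m\prod_{\ell_i \in \mathscr{P}_g,\, \ell_i \mid q-1}\ell_i \cdot \prod_{\ell_j \in \mathscr{P}_g,\, \ell_j \nmid q-1}(\ell_j - 1)$ of an infinite family of $n$ for which $\rho_g(n) \geq 1$. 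Without these your plan, as written, stalls: you notice that $\overline{\fq}\cap L_d$ can grow, but you have no mechanism to exhibit even one good degree $n$ once $\rho_g(n)$ is allowed to vanish.
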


Several additional variations of Artin's primitive root conjecture have been studied over function fields; for example, over Carlitz modules~\cite{Eisenstein2020, hsu_1997_carlitz}, rank one Drinfeld modules~\cite{HSU2001_drinfeld, KuoDrinfeld2022, YAO2010_drinfeld}, and one dimensional tori over function fields~\cite{chen2003primitive_tori}.
We further note that primitive polynomials are of interest for engineering applications involving LFSRs (linear-feedback shift registers).

\subsection{Quantitative Results}
As above, let $X$ denote a geometrically integral projective variety over $\fq$, and let $K$ denote its function field.  
If $K$ is a global function field and $g \in K$ geometric, Bilharz demonstrated that the set of Artin primes for $g \in K$ has positive Dirichlet density, i.e. that
\[\lim_{s \rightarrow 1}\frac{\sum_{\p \in P_{g}}|\kappa_{\p}|^{-s}}{\sum_{\p \in X}|\kappa_{\p}|^{-s}}>0,\]
where the sum in the denominator runs over the closed points in $X$, and where $P_{g} \subseteq X$ denotes the set of Artin primes for $g \in K$.  From this, Bilharz then deduced Artin's primitive root conjecture for $K$.\\
\\
A more quantitative description for this count was provided by Pappalardi and Shparlinski.  Let $N_{X}(g,n)$ denote the number of closed points $\p \in X$ such that deg $\p = n$, and such that $g \in K$ is a primitive root modulo $\p$.  In the particular case that $X= \mathscr{C}$ is a non-singular projective curve and $g \in K$ is geometric, Pappalardi and Shparlinski provided an asymptotic description of $N_{\mathscr{C}}(g,n)$, proving that for any $\varepsilon > 0$,

\begin{equation}\label{Pap Shpar theorem}
N_{\mathscr{C}}(g,n) = \frac{\varphi(q^{n}-1)}{n}+O_{\varepsilon,g,\mathscr{C}} \left(q^{n/2(1+\varepsilon)}\right).
\end{equation}

In this work, we generalize the above result by providing an asymptotic count for $N_{X}(g,n)$ while removing the assumptions that $g \in \fq(X)$ is geometric; that $X$ is non-singular; and moroever allowing $X$ to a geometrically integral projective variety of arbitrary finite dimension $r \geq 1$.  Our main result is the following:

\begin{theorem}  \label{thm.APC_general}
	Let $X/\Fq$ be a geometrically integral projective variety of dimension $r \geq 1$
	with function field $K = \Fq(X)$. Let $g \in K$ be a rational function.  If $g \in \fq$ or	$g$ is a full $\ell^{th}$ power in $K$ for some rational prime $\ell \mid q-1$ then $N_{X}(g,n) = 0$ for all $n > 1$. Otherwise, for any $\varepsilon > 0$,
	\begin{equation} \label{eq.number_roots_degree_n}
	N_{X}(g,n) = \rho_g(n) \left( \frac{\varphi(q^n-1)q^{n(r-1)}}{n}  + O_{\varepsilon,X,g} \left(
q^{n(r-1/2+\varepsilon)}\right) \right),
	\end{equation}
 where $\rho_{g}(n)$ is as in \eqref{rho_constant}. 
\end{theorem}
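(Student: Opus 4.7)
The plan is to detect the primitive-root condition via multiplicative characters of $\kappa_{\p}^\times$ and reduce everything to an exponential sum estimate on the variety $X$. Concretely, for each closed point $\p$ of degree $n$ at which $g$ is regular, the cyclic group $\kappa_{\p}^\times$ has order $q^n - 1$, and a standard M\"obius/character identity gives
\[
\mathbbm{1}\bigl[g \text{ generates } \kappa_{\p}^\times\bigr] = \sum_{d \mid q^n - 1} \frac{\mu(d)}{d} \sum_{\chi^d = 1} \chi(g(\p)),
\]
where the inner sum runs over characters of $\kappa_{\p}^\times$ of order dividing $d$. Summing over all closed $\p$ of degree $n$ and swapping the order of summation yields
\[
N_X(g,n) = \sum_{d \mid q^n - 1} \frac{\mu(d)}{d} \sum_{\chi^d = 1} \Bigl( \sum_{\deg \p = n} \chi(g(\p)) \Bigr).
\]

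For the trivial character $\chi = 1$ inside each $d$, the inner sum is simply the number of closed points of degree $n$ at which $g$ is regular. I would evaluate this via Lang--Weil applied to the open subvariety where $g$ is regular, obtaining $q^{nr}/n + O_X(q^{n(r-1/2)})$. The M\"obius identity $\sum_{d \mid m}\mu(d)/d = \varphi(m)/m$ then produces the predicted main term $\varphi(q^n-1)\, q^{n(r-1)}/n$. For each non-trivial character $\chi \neq 1$, I would invoke Proposition \ref{prop:exponential_sum_estimate}, the Weil-type exponential sum estimate derived earlier, to bound the character sum by $O_{\varepsilon, X, g}(q^{n(r-1/2+\varepsilon)})$ uniformly in $\chi$. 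Summing over the at most $d$ characters of order dividing $d$, and then over the $O_\varepsilon(q^{n\varepsilon})$ squarefree divisors of $q^n - 1$ on which $\mu$ is supported, absorbs this error contribution into the same order $q^{n(r-1/2 + \varepsilon')}$ at the cost of adjusting $\varepsilon$.

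The correction factor $\rho_g(n)$ encodes the non-geometric part of $g$: when $g \in K$ is not a geometric element, it factors through a non-trivial constant field extension, and the order of the reduction of $g$ in $\kappa_{\p}^\times$ is constrained by the intersection of $\kappa_{\p}$ with that extension. I would make this precise by decomposing $g$ into a geometric component together with a component lying in $\overline{\fq} \cap K$, and then tracking how the constant component forces certain characters in the decomposition above to become trivial (or vanish identically) for arithmetic reasons depending on $n$; this cleanly pulls the factor $\rho_g(n)$ from \eqref{rho_constant} out as an overall multiplier of both the main term and the error. The principal obstacle, as I see it, is Proposition \ref{prop:exponential_sum_estimate} itself, i.e.\ obtaining the Weil-type bound with enough uniformity in $\chi$ to survive summing over all divisors $d \mid q^n - 1$ and over all singular (possibly reducible after base change) varieties $X$; a secondary but delicate bookkeeping task is extracting $\rho_g(n)$ in the non-geometric case without disturbing either the Lang--Weil main term or the uniformity of the character-sum bound.
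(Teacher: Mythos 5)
Your framework (character/M\"obius detection, Lang--Weil for the main term, a Weil-type exponential sum bound for the error) is the same as the paper's, but you have mishandled the central difficulty, which is exactly where $\rho_g(n)$ comes from. You propose to invoke Proposition~\ref{prop:exponential_sum_estimate} for every non-trivial $\chi$; but that proposition is only valid when some prime $\ell \mid \mathrm{ord}(\chi)$ is one at which $g$ is geometric. If $\ell \in \mathscr{P}_g$ and $\chi$ has order $\ell$, then by Lemma~\ref{le.equivalent_characterisations_non_geometric} one has $g = \mu_\ell b_\ell^\ell$ with $\mu_\ell \in \fq^\times$, so $\chi(g(\rho)) = \chi(\mu_\ell)$ is a constant independent of $\rho$: the sum $\sum_\rho \chi(g(\rho))$ equals $\chi(\mu_\ell)\,\#\mathcal{R}_g^{(n)}$, which is of size $q^{nr}$, the same order as the main term. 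Bounding these sums by $O(q^{n(r-1/2+\varepsilon)})$ is simply false, and would lead you to the theorem's asymptotic with $\rho_g(n)$ replaced by $1$ --- wrong whenever $g$ is not geometric.

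Your third paragraph recognizes that non-geometric primes must be treated differently, but the description is too vague to close the gap and is off-target in detail: those characters do not become trivial or vanish identically, and $\overline{\fq}\cap K = \fq$, so the proposed decomposition of $g$ degenerates. The paper's actual mechanism is to retain the Euler-product form of the characteristic function from Lemma~\ref{lem.characteristic_function}; to show that for each $\ell \in \mathscr{P}_g$ with $\ell \mid q^n-1$ the Euler factor $1 - \varphi(\ell)^{-1}\sum_{\mathrm{ord}\,\chi = \ell}\chi(g(\rho))$ is independent of $\rho$ and evaluates to $1 - c_\ell(q^{n-1}+\cdots+1)/\varphi(\ell)$, a Ramanujan-sum expression pinned down by a suitable choice of embedding $\mathbb{F}_{q^n}^\times\hookrightarrow\mathbb{C}^\times$; to factor the product of these constants out of the $\rho$-sum as $\rho_g(n)$; and only then to M\"obius-expand the remaining product over $\ell \notin \mathscr{P}_g$, where Proposition~\ref{prop:exponential_sum_estimate} is legitimately applicable. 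A minor precision point at the start: $\chi(g(\p))$ has no uniform meaning as $\p$ varies over closed points; the paper works with $\mathbb{F}_{q^n}$-rational points $\rho$ so that $g(\rho)$ lands in a fixed $\mathbb{F}_{q^n}^\times$, picking up the factor $1/n$ in \eqref{closed_points_count_rational_points}.
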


By \cite[Theorem 2.9]{MontgomeryVaughan2007}, we note that $\varphi(q^n-1) \gg_\nu q^{n(1-\nu)}$ for any $\nu \in (0,1)$.  Thus \eqref{eq.number_roots_degree_n} indeed yields a main term, in the limit as $q^{n} \rightarrow \infty$.\\
\\
If $g \in K$ is a geometric element, then $\rho_g(n) = 1$ for all $n \geq 1$.  We thus recover equation (\ref{Pap Shpar theorem}) in the case that $X= \mathscr{C}$ is a projective curve (i.e. $r =1$).  For non-geometric $g \in K^{\times}$, it is possible that  $\rho_g(n) = 0$ for certain $n \in \mathbb{N}$. Nonetheless, we show that $\rho_g(n) \geq 1$ for infinitely many $n \in \mathbb{N}$, thereby confirming
Theorem \ref{Artin Conjecture} (see Section \ref{qualitative_results}).
\subsection{Comparison to Classical Setting}
Let $\mathcal{P} \subset \mathbb{N}$ denote the set of primes, and let $\mathcal{P}_{g}\subseteq \mathcal{P}$ denote the subset of \textit{Artin primes} for $g$, namely the primes $p \in \mathcal{P}$ for which $\mathrm{ord}_{p}(g) = p-1$.  When $g \in \N$ is not an exact power, Artin conjectured that the natural density of $\mathcal{P}_{g} \subseteq \mathcal{P}$ is equal to
\[A:=\prod_{\substack{p \textnormal{ prime}}}\left(1-\frac{1}{p(p-1)}\right)\approx .3739558,\]
a value known as the \textit{Artin constant}.  Due to careful numerical observations pioneered by Derrick and Emma Lehmer, it later emerged that, for certain $g$, an additional correction factor is needed.  Slightly more generally, the natural density of $\mathcal{P}_{g}\subseteq \mathcal{P}$ is conjectured to equal $c_{g}A_{h}$, where $A_{h}$ is an explicit Euler product, and $c_{g} \in \Q$.  $A_{h}$ is moreover a \textit{linear} factor, which depends on the value of the largest integer $h \geq 1$ such that $g$ is an $h^{th}$ power in $\Z$, while $c_{g}$ is a \textit{quadratic} correction factor that takes into account \textit{entanglements} between number fields of the form $\Q(\zeta_{\ell},g^{1/\ell})$, $\ell \in \mathcal{P}$.  This modified conjecture was eventually proven correct by Hooley~\cite{hooley_artin} under the assumption of the generalised Riemann Hypothesis (GRH) for Dedekind $\zeta$-functions.\\
\\
Returning to the function field setting, let $P_{n}$ denote the closed points of $X$ of fixed degree $n$, and let $P_{n}(g) \subseteq P_{n}$ denote the subset of Artin primes for $g \in K$; so that $\#P_{n}(g)=N_{X}(g,n)$. When $X$ is a (non-singular) curve and $g$ is geometric, it follows from \eqref{Pap Shpar theorem} that the density of $P_{n}(g) \subseteq P_{n}$ is asymptotic to $A(n) := \varphi(q^{n}-1)q^{-n}$, in the limit as $q^n \rightarrow \infty$.  More generally, we find from \eqref{eq.number_roots_degree_n} that the density of $P_{n}(g) \subseteq P_{n}$ is asymptotic to $A_{g}(n) := \rho_g(n)\varphi(q^{n}-1)q^{-n}$, where $\rho_g(n)$ depends on the factorization behaviour of $g$ in $K \otimes_{\fq} \overline{\mathbb{F}}_{q}$.\\
\\
Note that $A(n)$ does not, in fact, converge in the limit as $n \rightarrow \infty$.  Indeed, even the natural density of Artin primes, namely the limit
\[\lim_{N \rightarrow \infty}\frac{\sum_{n=1}^{N}N_{X}(g,n)}{\sum_{n=1}^{N}\#P_{n}},\]
does not, in general, exist.  This was demonstrated by Bilharz~\cite{bilharz1937primdivisoren} and expanded upon by Perng~\cite{perng_natural_density}.
\subsection*{Structure of Paper}
The remainder of this paper is structured as follows. In Section \ref{geometric background} we provide an overview of the relevant geometric set-up in which to present Artin's primitive root conjecture for function fields over $\fq$.  In Section \ref{geom_extensions} we then discuss \textit{geometric extensions} and \textit{geometric elements} $g \in K$.
 In Section \ref{Exponential_Sum}, we use Weil's theorem to establish a very general estimate for exponential sums
over a variety; a crucial step necessary to extend our results to the general setting of projective varieties.  Section \ref{Proof_of_Theorem} then establishes a proof of Theorem~\ref{thm.APC_general}, and Section \ref{qualitative_results} provides a proof of Theorem~\ref{Artin Conjecture}. Finally, in Section \ref{Heuristic_Interpretation} we provide a heuristic interpretation of the counting function $N_{X}(g,n)$, in the case that $K=\fq(X)$ is a global function field; in order to draw a conceptual comparison between our correction factor, $\rho_{g}(n)$, and the classical correction factor, $c_{g}$.
\subsection*{Acknowledgments} We thank Jonas Baltes, Chris Hall, Seoyoung Kim, Pieter Moree, Michael Rosen, and Damaris Schindler for helpful discussions.  The second author was funded by a Zuckerman Post-doctoral fellowship at the University of Haifa.

\section{Background on Projective Schemes}\label{geometric background}
\subsection*{Projective Schemes}
A \textit{graded ring} is a ring $S$ endowed with a direct sum decomposition $S = \oplus_{d\geq 0}S_{d}$ of the underlying additive group, such that $S_{d}S_{e} \subset S_{d+e}$.  We say that a non-zero element $a \in S$ is \textit{homogeneous} of degree $d$, denoted $\textnormal{deg } a=d$, if $a \in S_{d}$.  A \textit{homogenous ideal} is an ideal $I \subset S$ that is generated by a set of homogenous elements.  The ideal consisting of elements of positive degree, namely $S_{+}:= \oplus_{d > 0}S_{d}$, is referred to as the \textit{irrelevant ideal}.  If $S=\oplus_{d\geq 0}S_{d}$ is a graded ring, and $I \triangleleft S$ a homogenous ideal, then the quotient ring $R = S/I$ is itself a graded ring, with $R_{d} = S_{d}/(I \cap S_{d})$.\\
\\
 Consider the set 
\[\textnormal{Proj}(S):= \{\p \subseteq S: \p \textnormal{ homogenous prime ideal}, \hspace{2mm} S_{+} \not \subset \p\}.\]
We define a topology on $X = \mathrm{Proj}(S)$ (called the \textit{Zariski topology}) by designating the closed sets of \textnormal{Proj}($S$) to be of the form
\[Z(I):=\{\p \in \textnormal{Proj}(S): I \subseteq \p\},\]
where $I \subset S$ denotes a homogenous ideal.  A point $\mathfrak{p} \in X$ is said to be a \emph{closed point} if $\overline{\{ \mathfrak{p} \}} = \{ \mathfrak{p} \}$, equivalently, if there is no $\mathfrak{q}\in X$ such that $ \p \subsetneq \mathfrak{q}$. \\
\\
The \textit{distinguished open set} associated to any homogenous element $f \in S_{+}$ is then given by 
\[X_{f}:=\textnormal{Proj}(S) \setminus Z(\langle f \rangle)= \{\p \in \textnormal{Proj}(S): f \not \in \p \},\]
and the collection of such sets, namely $\{X_{f}: f \in S_{+} \textnormal{ homogeneous}\}$, forms a basis for the topology on $\textnormal{Proj}(S)$.  The space \textnormal{Proj}$(S)$, together with its Zariski topology, is referred to as a \textit{projective scheme}.\\
\\ 
The \textit{structure sheaf}, denoted $\mathcal{O}_{X}$, is a \textit{sheaf} on Proj$(S)$, defined on the distinguished open sets as
\[
\mathcal{O}_X(X_{f}):= S_{(f)} = \left\{\frac{a}{f^{n}}: a \in S \textnormal{ is homogenous}, n \in \Z_{\geq 0}, \textnormal{ deg }a = n\cdot \textnormal{deg }f \right\},
\]
i.e. as the zero-degree component of the localization $\{1,f,f^{2},\dots\}^{-1}S$. The projective scheme $X = \mathrm{Proj}(S)$ is called \emph{integral} if $S_{(f)}$ is an integral domain for any homogeneous $f \in S_+$.  An integral projective scheme is, in particular, irreducible as a topological space, and we let dim$(X)$ denote the \textit{dimension} of $X$.
We then find that dim$(Y) < $ dim$(X)$, for any proper closed subset $Y \subset X$.  Finally, we note that any integral scheme $X$ has a \emph{generic point} $\eta$, that is, an element $\eta \in X$ such that $\overline{\{ \eta\}} = X$.

\subsection*{Function Fields, Stalks, and Residue Fields}
Let $X$ denote an integral projective scheme.  Then for any homogeneous $f,g \in S_+$, $\mathrm{Frac}(S_{(f)}) \cong \mathrm{Frac}(S_{(g)})$, where $\mathrm{Frac}(R)$ denotes the fraction field of an integral domain $R$. We can thus define $\mathcal{K}(X) := \mathrm{Frac}(S_{(f)})$, for any homogeneous $f \in S_+$, to be the \emph{function field} of $X$, which can moreover be expressed explicitly as
\[\mathcal{K}(X):= \left \{\frac{a}{b}: a, b \in S_{d} \textnormal{ for some } d \in \Z_{\geq 0}, b \neq 0 \right\}.\]
The \textit{stalk} at a point $\mathfrak{p} \in X$ refers to the \textit{local ring}
\[\mathcal{O}_{X,\mathfrak{p}} := \left \{\frac{a}{b} \in \mathcal{K}(X): a, b \in S_{d} \textnormal{ for some } d \in \Z_{\geq 0},b \not \in \mathfrak{p}\right\},\]
whose unique maximal ideal is given explicitly by
\[\p \mathcal{O}_{X,\mathfrak{p}}:=\left \{\frac{a}{b} \in \mathcal{K}(X): a, b \in S_{d} \textnormal{ for some } d \in \Z_{\geq 0}, a \in \p, b \not \in \p\right\}.\]
We refer to $\kappa_{\mathfrak{p}} := \mathcal{O}_{X,\mathfrak{p}}/\mathfrak{p}\mathcal{O}_{X,\mathfrak{p}}$ as the \textit{residue field} of $\p$.
The intersection of all stalks, namely 
\[\mathcal{O}_{X}(X):= \bigcap_{\p \in X} \mathcal{O}_{X,\mathfrak{p}},\]
is referred to as the \textit{global sections} of $X$.  We moreover say that $X$ is \textit{normal} if $\mathcal{O}_{X,\mathfrak{p}}$ is an integrally closed domain inside $\mathcal{K}(X)$, for every $\mathfrak{p} \in X$.\\
\\
Two noteworthy properties of the function field $\mathcal{K}(X)$ are as follows.  First, if $\eta \in X$ is the generic point of $X$, then $\mathcal{K}(X)$ is isomorphic to the stalk $\mathcal{O}_{X,\eta}$. Second, if Spec$(R) \subset X$ is an \textit{affine open}, then $R$ is an integral domain and $\mathcal{K}(X)\cong \mathrm{Frac}(R)$.

\subsection*{Projective Varieties}
A \textit{projective variety} $X$ over the field $k$ is a projective integral scheme of the form $X = \textnormal{Proj}(S)$, where $S = k[x_{0},\dots,x_{n}]/I$ is a finitely generated $k$-algebra, and where $I \subseteq k[x_{0},\dots, x_{n}]$ is a homogenous ideal. Under these assumptions, we note that $X$ is both \textit{noetherian} and \textit{separated}. We denote its function field by $k(X)$ and we note that $\textnormal{dim}(X)$ is equal to the transcendence degree of $k(X)$ over $k$.  A projective variety of dimension one is referred to as a \textit{projective curve} (over $k$). \\
\\
Let $S = k[x_{0},\dots,x_{n}]/I$ be as above.  $\textnormal{Proj}(S)$ is said to be \textit{geometrically integral} if $\mathrm{Proj}(\overline{S})$ is integral, where $\overline{S} = (k[x_{0},\dots, x_{n}]/I) \otimes_k \overline{k}$.  For example, if $f \in k[x_{0},x_{1},x_{2}]$ is homogeneous of positive degree and \textit{absolutely} irreducible (i.e. irreducible over $\overline{k}$), then Proj$(k[x_{0},x_{1},x_{2}]/\langle f \rangle)$ is a geometrically integral projective curve.  We moreover let $\overline{k}(X)$ denote the function field of $\textnormal{Proj}(\overline{S})$.\\
\\
Let $X=\mathrm{Proj}(S)$ be a geometrically integral projective variety over $\Fq$, and let $K = \Fq(X)$ denote the function field of $X$.  Fix $g \in K$ and let $\mathfrak{p} \in X$ be closed.  We say that $g$ is \textit{regular} at $\mathfrak{p}$ if $g \in \mathcal{O}_{X,\mathfrak{p}} \subset K$.  For $g \in \mathcal{O}_{X,\mathfrak{p}} \setminus \p \mathcal{O}_{X,\mathfrak{p}}$, let ord$_{\p}(g)$ denote the order of $g \textnormal{ mod } \p \mathcal{O}_{X,\mathfrak{p}}$ in the multiplicative group $\kappa_{\mathfrak{p}}^{\times}$.  Note that since $\p \in X$ is closed, $\kappa_{\p}$ is in fact a finite algebraic extension of $\fq$, and we define $\deg \mathfrak{p} := [\kappa_{\mathfrak{p}} \colon \Fq]$ to be the \textit{degree} of $\p \in X$.  We say that $g \in K$ is a \emph{primitive root modulo $\mathfrak{p}$} if $g$ is regular at $\mathfrak{p}$ and if $g \textnormal{ mod } \p \mathcal{O}_{X,\mathfrak{p}}$ generates the multiplicative group $\kappa_{\mathfrak{p}}^{\times}$.    
 Equivalently, we thus say that $g \in K$ is a primitive root modulo $\mathfrak{p}$ if $\textnormal{ord}_{\p}(g) = \#|\kappa_{\mathfrak{p}}^{\times}|= q^{\deg \mathfrak{p}}-1$.


\subsection*{Divisors and Valuations}
Let $X=\mathrm{Proj}(S)$ be a normal, geometrically integral, projective variety over $\Fq$.  In particular, $X$ is a noetherian normal integral separated scheme.  A \emph{prime divisor} $Y$ of $X$ is a closed integral subscheme $Y \subset X$ of codimension one, i.e. such that dim$(Y) = \textnormal{dim}(X)-1$. Let $Y$ be a prime divisor of $X$, and let $\eta_{Y}$ denote its generic point.  As noted in \cite[Lemma II.6.1]{hartshorne2013algebraic}, the stalk $\mathcal{O}_{X, \eta_Y}$ is a discrete valuation ring with $\mathrm{Frac}(\mathcal{O}_{X, \eta_Y}) = K$.  We thus find that for any prime divisor $Y \subset X$, the valuation $v_Y:\mathcal{O}_{X, \eta_Y}^{\times} \rightarrow \Z$ corresponding to $\mathcal{O}_{X, \eta_Y}$ may be extended to a function $v_Y:K^{\times} \rightarrow \Z$.  If $v_Y(g) = n \in \Z_{>0}$, we say that $g$ has a \textit{zero} along $Y$ (of order $n$), while if $v_Y(g) = n \in \Z_{<0}$, we say that $g$ has a \textit{pole} along $Y$ (of order $n$).  We moreover note that $v_Y(\mu)=0$ for all $\mu \in \fq^{\times}$.\\
\\
Given $g \in K^\times$, we find that $v_Y(g) = 0$ for all but finitely many prime divisors $Y \subset X$ \cite[Lemma II.6.1]{hartshorne2013algebraic}. We may therefore define the \textit{degree} of $g$ to be
\begin{equation}\label{eq:def_deg}
\deg(g) := \sum_{Y \subset X} \lvert v_{Y}(g) \rvert,
\end{equation}
where the sum runs over all prime divisors $Y$ of $X$.   Alternatively, $\deg(g)$ may be viewed as the number of poles and zeros of $g$ on $X$, counted with multiplicity.  Note that in the particular case in which $X$ is a curve, the set of prime divisors of $X$ is precisely given by the set of closed points in $X$.

\subsection*{Rational Points}
Let $R = \fq[x_{1},\dots,x_{m}]/I$, be a finitely generated $\fq$-algebra, where $I  \subseteq \fq[x_{1},\dots,x_{m}]$ is an ideal.  Denote by Spec$(R)$ the \textit{affine $\fq$-scheme}, which is the affine scheme whose underlying set is the collection of prime ideals in $R$. The closed points of $\text{Spec}(R)$ are given by the maximal ideals of $R$. For finitely generated $\fq$-algebras $R$ and $S$, we further recall that morphisms $\rho$: Spec$(S) \rightarrow \text{Spec}(R)$ between $\fq$-schemes are in one-to-one correspondence with the $\fq$-algebra homomorphisms $\rho^{\#}: R \rightarrow S$ induced by the $\Fq$-algebra structure.\\
\\
An $\mathbb{F}_{q^{n}}$-\textit{rational point} of Spec$(R)$ is an $\fq$-scheme morphism
\[\rho: \text{Spec}(\mathbb{F}_{q^{n}}) \rightarrow \text{Spec}(R),\]
which then corresponds to a homomorphism of $\fq$-algebras
\[\rho^{\#}: R = \fq[x_{1},\dots,x_{m}]/I \rightarrow \mathbb{F}_{q^{n}}.\]
In particular, $\rho^{\#}:R \rightarrow \mathbb{F}_{q^{n}}$ is a ring homomorphism with $\Fq \subseteq \mathrm{Im}(\rho^\#) \subseteq \mathbb{F}_{q^n}$.  By the first isomorphism theorem, we note that $\mathrm{Im}(\rho^\#) \cong R/\textnormal{ker}(\rho^{\#})$, where $\textnormal{ker}(\rho^{\#})\subset R$ is maximal, since $\mathrm{Im}(\rho^\#)$ is in fact a field.  Thus, to any $\mathbb{F}_{q^{n}}$-rational point $\rho$, we may associate a unique maximal ideal, $\mathfrak{m} := \textnormal{ker}(\rho^{\#})\subset R$.

 Conversely, suppose $\mathfrak{m}\subset R$ is a maximal ideal. Then $R/\mathfrak{m} \cong \mathbb{F}_{q^{m}}$, where $m = [\mathbb{F}_{q^{m}}:\mathbb{F}_{q}] = \textnormal{deg }\mathfrak{m}.$
  If $m \leq n$ one may then associate precisely $m$ different $\mathbb{F}_{q^{n}}$-rational points to the closed point $\mathfrak{m} \in \textnormal{Spec}(R)$ as follows. Note that there are precisely $m$ different $\Fq$-invariant inclusions  $\varphi \colon \mathbb{F}_{q^m} \hookrightarrow \mathbb{F}_{q^n}$ 
  coming from the elements in $\textnormal{Gal}(\mathbb{F}_{q^{m}}/\fq) \cong \Z/m\Z$.
  
 Let $\pi: R \rightarrow R/\mathfrak{m}$ denote the projection map.  Then for any $\varphi$ as above the $\fq$-algebra homomorphism $\rho^{\#}_{\varphi}: R \rightarrow \mathbb{F}_{q^{m}}$ given by $\rho^{\#}_{\varphi} = \varphi \circ \pi$ corresponds to a unique $\mathbb{F}_{q^{n}}$-rational point.  Thus each closed point $\mathfrak{m} \in \textnormal{Spec}(R)$ of degree $m$ gives rise to precisely $m$ distinct $\mathbb{F}_{q^{n}}$-rational points, $\rho^{\#}_{\varphi}$.\\
\\
Let $X$ be a geometrically integral projective variety over $\fq$ with function field $K = \fq(X)$.  We let $X(\mathbb{F}_{q^{n}})$ denote the set of $\mathbb{F}_{q^{n}}$-rational points of $X$, i.e. the set of $\fq$-scheme morphisms
\[
\rho \colon \mathrm{Spec} (\mathbb{F}_{q^{n}}) \rightarrow X,
\]
Each $\rho$ factors through some open affine subscheme Spec$(R) \subset X$, where $R$ is an integral domain and moreover $K \cong \mathrm{Frac}(R)$.  Upon restricting to the preimage of Spec$(R)$ under $\rho$, each $\rho$ induces an $\fq$-algebra homomorphism
\[
\rho^{\#} \colon R \rightarrow \mathbb{F}_{q^{n}}.
\]
We now describe how to evaluate an element $g \in K^{\times}$ at a rational point $\rho \in X(\mathbb{F}_{q^{n}})$.  Consider $g = a/b \in K^{\times}$, where $a,b \in R$ and $b \neq 0$. If $\rho^{\#}(b) \neq 0$, we evaluate
\begin{equation}\label{eq:def_rational_point_map}
g(\rho) := \frac{\rho^{\#}(a)}{\rho^{\#}(b)} \in \mathbb{F}_{q^{n}}.
\end{equation}
Otherwise, we say that $g$ has a \emph{pole} at $\rho$.

Recall that the closed point $\mathfrak{p}$ corresponding to $\rho$ is given by $\mathfrak{p} = \mathrm{ker}(\rho^{\#}) \subset R$. Clearly $\rho^{\#}(b) = 0$ precisely when $b \in \mathrm{ker}(\rho^{\#}) = \mathfrak{p}$. Hence $g = a/b$ is regular at $\mathfrak{p}$ if and only if $g$ does not have a pole at any of the corresponding rational points, $\rho$.\\
\\
As a concrete example, consider the case $X = \mathbb{P}_{\mathbb{F}_q}^1$. Then $K = \fq(t)$ and the closed points correspond to irreducible monic polynomials in $\fq[t]$ in addition to the point at infinity. The generic point $\eta$ is then given by the zero-ideal, $(0) \subset \fq[t]$.  A closed point $\p \in X$ of degree $n$ is the ideal generated by an irreducible polynomial $p(t) \in \fq[t]$ of degree $n$, which correspond to $n$ distinct $\mathbb{F}_{q^{n}}$-rational points, namely the roots of $p(t)$.  Given a fixed root $\rho$ of $p(t)$, the map $\rho^{\#}$ is then given by
\[\rho^{\#}: \fq[t] \twoheadrightarrow \fq[t]/(p(t))\xrightarrow{\sim} \mathbb{F}_{q^n}, \hspace{5mm} f \mapsto [f] \mapsto f(\rho).\]
Artin's primitive root conjecture for $K$ then reduces to the particular setting described in Section \ref{Fq[T] Artin}.

\subsection*{Number field analogue}
We conclude this section by noting that the classical version of Artin's primitive root conjecture (i.e. the case over number fields) may also be phrased in a geometric language. Let $K/\Q$ be a number field with ring of integers $\mathcal{O}_K$, and recall that the closed points of Spec$(\mathcal{O}_K)$ are the set of non-zero prime ideals of $\mathcal{O}_K$. The residue field of a closed point $P \in \mathrm{Spec}(\mathcal{O}_K)$, i.e. of a non-zero prime ideal of $\mathcal{O}_K$, is given by $\kappa_{P}=\mathcal{O}_K/P$.  Given $g \in K$ we say that $g$ is a primitive root modulo a non-zero prime ideal $P \in \mathrm{Spec} (\mathcal{O}_K)$ if $v_P(g) \geq 0$ and if $g$ generates the multiplicative group $\kappa_{P}^{\times}=(\mathcal{O}_K/P)^\times$. Artin's primitive root conjecture over $K$ states that for appropriate $g \in K$, the number of prime ideals $P$ for which $g$ is a primitive root is infinite.

\section{Geometric Extensions}\label{geom_extensions}
Let $X = \textnormal{Proj}(S)$ denote a geometrically integral projective variety over $\fq$, where char$(\fq) = p$. Let $K = \fq(X)$ denote its function field.  Recall that we write $\overline{\mathbb{F}}_q(X)$ to refer to the function field of the base change of $X$ to $\overline{\mathbb{F}}_q$, namely to the compositum of fields $K \overline{\mathbb{F}}_q$.  We moreover note that $\overline{\mathbb{F}}_q(X) \cong \mathbb{F}_q(X) \otimes_{\fq} \overline{\mathbb{F}}_q$.\\

Let $\overline{K}$ denote a fixed algebraic closure of $K$, and consider the algebraic field extensions $L/K$ and $M/\fq$.  Note that $L$ and $M$, as well as the compositum $LM$, may each be embedded inside $\overline{K}$. Given an algebraic field extension $L/K$ we thus write $\overline{\mathbb{F}}_q \cap L \subset \overline{K}$ to be the \textit{constant field} of $L$, namely the maximal algebraic subextension of $\fq$ inside $L$. In particular, since $K = \fq(X)$, where $X$ is integral of finite type over $\fq$, it follows from~\cite[Proposition 2.2.22]{poonen2017rational} that $K \cap \overline{\mathbb{F}}_q = \mathbb{F}_q$.\\
\begin{definition}
Let $K$ be as above and let $L_{2}/L_{1}/K$ be a tower of algebraic field extensions. We say that $L_{2}/L_{1}$ is a \textbf{geometric field extension} if $L_{2} \cap \overline{\mathbb{F}}_{q} = L_{1}\cap \overline{\mathbb{F}}_{q}$.  In particular, if $L/K$ is an algebraic field extension, we say that $L/K$ is a geometric field extension if  $L \cap \overline{\mathbb{F}}_{q} = \mathbb{F}_{q}$.
\end{definition}

\begin{definition}\label{geometric element}
Let $g\in K$.
We say that $g$ is \textbf{geometric at a rational prime} $\ell \neq  p$ if, for all roots $\alpha \in \overline{K}$ of the polynomial $X^\ell-g$, the extension $K(\alpha)/K$ is a proper geometric extension of fields. If
$g \in K$ is geometric at all primes $\ell\neq p$, we refer to $g \in K$ as \textbf{geometric}.
\end{definition}

\begin{lemma} \label{le.equivalent_characterisations_non_geometric}
	Let $K = \fq(X)$, let $g \in K$ and let $\ell \neq p$ be a rational prime. The following are equivalent:
	\begin{enumerate}[(i)]
		\item $g$ is not geometric at $\ell$.
		\item There exists $\mu \in \fq$ and $b \in K$ such that $g = \mu b^\ell$.
		\item There exists $\tilde{g} \in K \overline{\mathbb{F}}_q$ such that $g = \tilde{g}^\ell$.
	\end{enumerate}
\end{lemma}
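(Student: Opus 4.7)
The plan is to prove the three implications cyclically, $(ii) \Rightarrow (iii) \Rightarrow (i) \Rightarrow (ii)$. The first is essentially immediate: since $\ell \neq p$, every $\mu \in \fq^{\times}$ admits an $\ell$-th root $\nu \in \overline{\mathbb{F}}_{q}^{\times}$, so writing $g = \mu b^{\ell} = (\nu b)^{\ell}$ with $\nu b \in K\overline{\mathbb{F}}_{q}$ gives $(iii)$.

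For $(iii) \Rightarrow (i)$ I would take $\alpha := \tilde{g}$ as a root of $X^{\ell} - g$. Since $[K(\alpha):K] \leq \ell$ is finite and $\overline{\mathbb{F}}_{q} = \bigcup_{n} \mathbb{F}_{q^{n}}$, one has $\alpha \in K\mathbb{F}_{q^{n}}$ for some $n$. Geometric integrality of $X$ gives $K \cap \overline{\mathbb{F}}_{q} = \fq$, so $\mathrm{Gal}(K\mathbb{F}_{q^{n}}/K) \cong \mathrm{Gal}(\mathbb{F}_{q^{n}}/\fq)$ is cyclic; by the Galois correspondence every intermediate field of $K\mathbb{F}_{q^n}/K$ is itself a constant field extension, so $K(\alpha) = K\mathbb{F}_{q^{m}}$ for some $m \mid n$. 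If $m = 1$ then $K(\alpha) = K$ is not proper, while if $m > 1$ then $K(\alpha) \cap \overline{\mathbb{F}}_{q} \supseteq \mathbb{F}_{q^{m}} \supsetneq \fq$ is not geometric. Either way, $g$ fails to be geometric at $\ell$.

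For $(i) \Rightarrow (ii)$, assume some root $\alpha$ of $X^{\ell} - g$ yields an extension $K(\alpha)/K$ that is either not proper or not geometric. If $\alpha \in K$ take $(\mu, b) = (1, \alpha)$. Otherwise $\alpha \notin K$ and, since $\ell$ is prime, $[K(\alpha):K] = \ell$; the constant field $E := K(\alpha) \cap \overline{\mathbb{F}}_{q}$ satisfies $\fq \subsetneq E$ and $[E:\fq] \mid \ell$, forcing $E = \mathbb{F}_{q^{\ell}}$ and $K(\alpha) = K\mathbb{F}_{q^{\ell}}$, a cyclic Galois extension with some generator $\sigma$. The identity $\sigma(\alpha)^{\ell} = \sigma(g) = g = \alpha^{\ell}$ shows that $\zeta := \sigma(\alpha)/\alpha$ is a non-trivial $\ell$-th root of unity in $\mathbb{F}_{q^{\ell}}^{\times}$, and $\sigma^{\ell} = \mathrm{id}$ translates into $N_{\mathbb{F}_{q^{\ell}}/\fq}(\zeta) = 1$. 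Hilbert's Theorem 90 applied to $\mathbb{F}_{q^{\ell}}/\fq$ then produces $\eta \in \mathbb{F}_{q^{\ell}}^{\times}$ with $\zeta = \sigma(\eta)/\eta$; setting $b := \alpha/\eta$ and $\mu := \eta^{\ell}$, one checks that $\sigma(b) = b$ (so $b \in K$) and $\sigma(\mu) = (\zeta\eta)^{\ell} = \zeta^{\ell}\eta^{\ell} = \mu$ (so $\mu \in \fq$), yielding $g = \alpha^{\ell} = \mu b^{\ell}$. This last implication is the main obstacle: one must first pin down $K(\alpha)$ precisely as the constant field extension $K\mathbb{F}_{q^{\ell}}$, and then arrange the Hilbert 90 cocycle so that the resulting scalar $\mu = \eta^{\ell}$ descends all the way to $\fq$ rather than merely to $\mathbb{F}_{q^{\ell}}$; the other two implications reduce to standard facts about constant field extensions of geometrically integral function fields.
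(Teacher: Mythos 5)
Your proof is correct and proves the same cyclic chain of implications, but the (i)\,$\Rightarrow$\,(ii) step uses a genuinely different, cleaner mechanism. Both arguments first pin down $K(\alpha) = K\mathbb{F}_{q^\ell}$ as a degree-$\ell$ constant field extension. From there the paper selects a non-$\ell$-th-power $\mu \in \fq^\times$, sets $\beta^\ell = \mu$, observes $K(\alpha) = K(\beta)$, expands $\alpha$ in the basis $\{1,\beta,\dots,\beta^{\ell-1}\}$, and uses the Galois action together with linear independence to kill all but one coefficient, obtaining $g = \mu^{i_0} b_{i_0}^\ell$. You instead carry out the standard Kummer descent: the cocycle $\zeta = \sigma(\alpha)/\alpha$ lies in $\mathbb{F}_{q^\ell}^\times$ and has norm $1$, Hilbert's Theorem 90 for $\mathbb{F}_{q^\ell}/\fq$ produces $\eta$ with $\zeta = \sigma(\eta)/\eta$, and then $b = \alpha/\eta$ and $\mu = \eta^\ell$ are $\sigma$-invariant and hence descend to $K$ and $\fq$ respectively. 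This is more conceptual and avoids the basis bookkeeping; the extra cost is having to justify that the relevant norm is computed in $\mathbb{F}_{q^\ell}/\fq$ (which you handle via $\sigma|_{\mathbb{F}_{q^\ell}}$ generating $\mathrm{Gal}(\mathbb{F}_{q^\ell}/\fq)$).

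Two small points worth tightening. First, your case split ``if $\alpha \in K$\dots otherwise $\alpha \notin K$, since $\ell$ is prime $[K(\alpha):K]=\ell$'' is a step too fast: the witness root being outside $K$ does not by itself make $X^\ell - g$ irreducible (e.g.\ $g = c^\ell$ with $c \in K$ and $\alpha = \zeta c$ for $\zeta$ a primitive $\ell$-th root of unity with $\zeta \notin \fq$ gives $[K(\alpha):K] = [K(\zeta):K] \mid \ell - 1$). The clean split is on whether $g \in K^\ell$ (then (ii) is immediate with $\mu = 1$) versus $g \notin K^\ell$ (then $X^\ell - g$ is irreducible by the Vahlen--Capelli criterion and every root has degree $\ell$); the paper's own writeup shares the same imprecision. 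Second, the paper's basis computation surfaces, as a byproduct, that $\ell \mid q-1$ in the nontrivial case, which is used in the remark immediately following the lemma; your proof contains this too (your $\zeta$ is a primitive $\ell$-th root of unity in $\mathbb{F}_{q^\ell}$, so $\ell \mid q^\ell - 1$ and hence $\ell \mid q - 1$), but you should record it explicitly if you want your argument to substitute verbatim for the paper's.
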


\begin{proof} 

	(i) $\implies$ (ii):  Since $g$ is not geometric at $\ell$, by definition there exists a root $\alpha \in \overline{K}$ of $X^\ell-g$ such that $K(\alpha)/K$ is either not a proper field extension or such that $K(\alpha) \cap \overline{\mathbb{F}}_q \neq \fq$. In the former case, we may write $g = \mu b^{\ell}$ where $b = \alpha$ and $\mu=1$, and we're done.
	
We therefore assume $K(\alpha)/K$ to be a proper field extension which is not geometric, i.e. $M := K(\alpha) \cap \overline{\mathbb{F}}_q \neq \fq$.  Note that since $g$ is not an $\ell^{th}$ power the polynomial $X^\ell-g$ is irreducible over $K$ ~\cite[VI \S9]{lang2012algebra}.  
Thus $X^\ell-g$ is the minimal polynomial of $\alpha$, and we conclude that $[K(\alpha) :K] = \ell$.

Since $M \supsetneq \fq$ we further note that $K(\alpha) \supseteq MK \supsetneq K$. Since $[K(\alpha):K] = \ell$ is prime, it follows that $MK = K(\alpha)$. Next, note that $M/\fq$ is a finite extension of finite fields and hence Galois.  It then further follows that the extension of composita $MK/\fq K = K(\alpha)/K$ is also Galois.  Thus $X^\ell-g$ splits into distinct linear factors in $K(\alpha)$, and we may write
\[X^\ell-g = \prod_{i=0}^{\ell-1}(x - \zeta_{\ell}^{i}\alpha),\]
where $\zeta_{\ell} \in K(\alpha)$ denotes a fixed primitive $\ell^{th}$ root of unity.

Note then that $K(\alpha) \supseteq K(\zeta_{\ell}) \supseteq K$, and moreover that $K(\alpha) \neq K(\zeta_{\ell})$ since $[K(\zeta_{\ell}):K]\leq \ell-1 < \ell$.  Therefore, since $\ell$ is prime, it follows that $K(\zeta_{\ell}) = K$.  Since elements in $K$ of finite order lie in $K \cap \overline{\mathbb{F}}_q = \fq$, it follows that $\zeta_{\ell} \in \fq^{\times}$.  Noting that 
$\zeta_{\ell} \in \fq^{\times}$ if and only if $\ell \mid q-1$, we further conclude that $\ell \mid q-1$. 
	
	By ~\cite[Proposition 8.1]{rosen2002number}, we find that $[M:\fq] = [K(\alpha):K] =\ell.$  Since 
$\ell \mid q-1$, we moreover find that
\[\# \fq^{\times,\ell} = \frac{q-1}{\ell}< q-1 = \# \fq^{\times},\]
and thus, in particular, that there exists an element $\mu \in \fq^{\times}$ that is \textit{not} an $\ell^{th}$ power.  Again by~\cite[VI \S9]{lang2012algebra}, we find that the polynomial $X^\ell-\mu$ is irreducible over $\fq$, and therefore 
$[\fq(\beta):\fq]= \ell$, where $\beta \in \overline{\mathbb{F}}_q$ is any root of the polynomial $X^{\ell}-\mu$.  By the uniqueness of finite field extensions, it further follows that $M = \fq(\beta)$. From~\cite[Proposition 8.1]{rosen2002number} it then also follows that $\{1,\beta, \dots, \beta^{\ell-1}\}$ form a basis for $K(\alpha)/K$, and therefore $K(\alpha) = K(\beta)$.
	Hence there exist $b_i \in K$, $i = 0,1, \hdots, \ell-1$ such that
	\[
	\alpha = \sum_{i=0}^{\ell-1}b_i \beta^i.
	\]
	Let $\sigma$ be a non-trivial element of $\mathrm{Gal}(K(\alpha)/K)$.  Then $\sigma(\alpha)$ is a root of $X^{\ell}-g$, 
and we may write $\sigma(\alpha) = \zeta_\ell^n \alpha$ for some $n \in \{1, \hdots, \ell-1\}$.  Similarly, $\sigma(\beta)$ is a root of $X^{\ell}-\mu$, i.e. $\sigma(\beta) = \zeta_\ell^m \beta$ for some $m \in \{1, \hdots, \ell-1\}$. Hence
	\[
\sum_{i=0}^{\ell-1}b_i\zeta_\ell^n  \beta^i=	\zeta_\ell^n \alpha = \sigma(\alpha) = \sigma \left( \sum_{i=0}^{\ell-1}b_i \beta^i \right) = \sum_{i=0}^{\ell-1}b_i \zeta_{\ell}^{mi} \beta^i.
	\]
	Since $\{1, \beta, \hdots, \beta^{\ell-1}\}$ are linearly independent over $K$, it follows that 
		\[
b_i\zeta_\ell^n =b_i \zeta_{\ell}^{mi} \quad \textnormal{ for all } \quad 0 \leq i \leq \ell - 1.
	\]
Whenever $n \not \equiv mi \pmod \ell$ this implies $b_{i} = 0$.  Note that there exists a unique $0 \leq i_0 \leq \ell-1$ such that $n \equiv mi_0 \pmod \ell$. It follows that $\alpha = b_{i_{0}}\beta^{i_{0}}$, and therefore
	\[
	g = \alpha^\ell = \tilde{\mu} b_{i_0}^\ell,
	\]
where $\tilde{\mu} = \mu^{i_{0}} \in \fq^{\times}$. This shows the desired claim.

	(ii) $\implies$ (iii): Set $\tilde{g} = \sqrt[\ell]{\mu} b$, where $\sqrt[\ell]{\mu}$ denotes any root of $X^\ell- \mu$ in $\overline{\mathbb{F}}_q$. \\

	(iii) $\implies$ (i): If $\tilde{g} \in K$ then $g$ is an $\ell^{th}$ power, and therefore $g$ is clearly not geometric at $\ell$. So, assume $\tilde{g} \notin K$. Then $K \subsetneq K(\tilde{g}) \subset K \overline{\mathbb{F}}_q$.  Since $K(\tilde{g})/K$ is a proper finite extension, in fact $K(\tilde{g}) \subseteq K\mathbb{F}_{q^{n}}$, for some $n > 1$.  Moreover, since $\textnormal{Gal}(K\mathbb{F}_{q^{n}}/K) \cong \textnormal{Gal}(\mathbb{F}_{q^{n}}/\fq) \cong \Z/n\Z$, there exists a unique subgroup of $\textnormal{Gal}(K\mathbb{F}_{q^{n}}/K)$ of any given order dividing $n$. By the fundamental theorem of Galois theory, we then find that there exists a unique subextension of $K\mathbb{F}_{q^{n}}/K$ of degree $\ell = [K(\tilde{g}):K]$, and thus may conclude that $ K(\tilde{g}) = K\mathbb{F}_{q^{\ell}} $.  By \cite[Proposition 8.3]{rosen2002number}, it follows that 
\[K(\tilde{g})\cap \overline{\mathbb{F}}_q = K\mathbb{F}_{q^{\ell}} \cap \overline{\mathbb{F}}_q = \mathbb{F}_{q^{\ell}}\supsetneq \fq,\]
i.e. $g$ is not geometric at $\ell$, as desired.
	\end{proof}
The first part of the proof of Lemma~\ref{le.equivalent_characterisations_non_geometric} shows that if $g$ is not geometric at a prime $\ell$ such that $\ell \nmid q-1$, then $g$ is already a full $\ell^{th}$ power in $K$.  In particular, suppose $g = \tilde{g}^{\ell}$, for some prime $\ell \nmid q-1$ such that $\ell \mid q^{n}-1$. Consider a closed point $\p \in X$, where $\deg \p = n$, and such that $g$ is regular at $\p$.   Then by a similar argument to that provided in Section \ref{Fq[T] Artin}, we find that \[\mathrm{ord}_{\p}(g)=\ell \cdot  \mathrm{ord}_{\p}(\tilde{g}) \leq q^{n}-1 \Longrightarrow \mathrm{ord}_{\p}(g) \leq \frac{q-1}{\ell} < q-1,\]
where we moreover note that since $g$ regular at $\p$ then $\tilde{g}$ is also regular at $\p$.  In other words, we conclude as follows:

\begin{cor}
Suppose $g \in K^{\times}$ is not geometric at some prime $\ell \nmid q-1$ such that $\ell \mid q^{n}-1$.  Then $g$ is not a primitive root modulo any closed point $\p \in X$ of degree $n$, i.e. $N_{X}(g,n) = 0.$
\end{cor}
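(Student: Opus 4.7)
The plan is to exploit Lemma~\ref{le.equivalent_characterisations_non_geometric} to upgrade the non-geometric hypothesis to the stronger statement that $g$ is a full $\ell$-th power in $K$, and then use $\ell \mid q^n - 1$ to force the image of $g$ into a proper subgroup of every residue field $\kappa_{\p}^{\times}$ at closed points of degree $n$. The implication (i) $\Rightarrow$ (ii) of Lemma~\ref{le.equivalent_characterisations_non_geometric} supplies $\mu \in \fq$ and $b \in K$ with $g = \mu b^{\ell}$; since $\ell \nmid q-1$, raising to the $\ell$-th power is an automorphism of the cyclic group $\fq^{\times}$, so $\mu = \lambda^{\ell}$ for some $\lambda \in \fq^{\times}$. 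Setting $\tilde{g} := \lambda b \in K$ yields the identity $g = \tilde{g}^{\ell}$ in $K$.

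Now fix an arbitrary closed point $\p \in X$ of degree $n$. If $g$ is not regular at $\p$ or if $g \in \p\mathcal{O}_{X,\p}$, the conclusion is immediate from the definition, so assume $g \in \mathcal{O}_{X,\p}^{\times}$. Because $\tilde{g}$ satisfies the monic equation $T^{\ell} - g = 0$ over $\mathcal{O}_{X,\p}$, it is integral over that local ring; passing to the normalisation of $X$ (which shares the function field $K$), $\tilde{g}$ reduces to an element $\bar{\tilde{g}}$ with $\bar{\tilde{g}}^{\ell} = \bar{g}$. Descending $\bar{\tilde{g}}$ from the a priori larger residue field at a preimage in the normalisation back down to $\kappa_{\p}$ requires a short Galois calculation, relying on $\ell \nmid q-1$ together with $\ell \neq p$ to exclude the pathological case where an $\ell$-th root of $\bar{g}$ might live in $\mathbb{F}_{q^{n\ell}} \setminus \mathbb{F}_{q^n}$. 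Thus $\bar{g} \in (\kappa_{\p}^{\times})^{\ell}$, and since $\ell \mid q^n - 1$, this subgroup has index exactly $\ell$ in the cyclic group $\kappa_{\p}^{\times}$.

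We therefore conclude
\[
\mathrm{ord}_{\p}(g) \leq \frac{q^n - 1}{\ell} < q^n - 1,
\]
so $g$ cannot generate $\kappa_{\p}^{\times}$, and as $\p$ was arbitrary among closed points of degree $n$ this establishes $N_{X}(g, n) = 0$. The main obstacle is the descent step in the middle paragraph: it is automatic when $X$ is normal (since then $\tilde{g} \in \mathcal{O}_{X,\p}$ outright), but in the non-normal setting it depends on the Galois remark above, which is precisely where the coprimality $\gcd(\ell, q-1) = 1$ plays a role beyond its initial use in promoting Lemma~\ref{le.equivalent_characterisations_non_geometric}(ii) to a genuine $\ell$-th power factorisation.
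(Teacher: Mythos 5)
Your first two steps are correct and in fact give a slightly cleaner route than the paper's to the key fact. The paper extracts ``$g$ is already an $\ell^{th}$ power in $K$'' by re-running the case analysis inside the proof of Lemma~\ref{le.equivalent_characterisations_non_geometric}(i)$\Rightarrow$(ii); you instead apply the lemma as a black box to get $g = \mu b^{\ell}$ and then observe that, since $\gcd(\ell,q-1)=1$, the $\ell^{th}$-power map is a bijection on $\fq^{\times}$, so $\mu = \lambda^{\ell}$ and $g = (\lambda b)^{\ell}$. That is a tidy and correct simplification. You also rightly notice that the paper's one-line justification — ``since $g$ regular at $\p$ then $\tilde{g}$ is also regular at $\p$'' — is immediate only when $\mathcal{O}_{X,\p}$ is integrally closed, i.e.\ when $X$ is normal at $\p$; in general $\tilde{g}$ is merely \emph{integral} over $\mathcal{O}_{X,\p}$.

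The gap is in the step you call a ``short Galois calculation.'' No such calculation exists: the assertion that an $\ell^{th}$ root of $\bar{g}$ must descend from a preimage residue field $\kappa_{\mathfrak{q}} \supseteq \mathbb{F}_{q^{n}}$ in the normalisation down to $\mathbb{F}_{q^{n}}$ is logically equivalent to the conclusion you are trying to reach (that $\bar{g}$ is an $\ell^{th}$ power in $\kappa_{\p}^{\times}$), so invoking it is circular. Moreover, $\ell \nmid q-1$ and $\ell \neq p$ do not force the descent. Take $q=2$, $\ell=3$, $n=2$: then $\ell \nmid q-1$ and $\ell \mid q^{n}-1 = 3$, yet any element of order $3$ in $\mathbb{F}_{4}^{\times}$ has its cube roots (of order $9$) in $\mathbb{F}_{64}^{\times}\setminus\mathbb{F}_{4}^{\times}$, not in $\mathbb{F}_{4}$. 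So if $X$ has a non-normal closed point $\p$ of degree $2$ whose unique preimage $\mathfrak{q}$ in the normalisation has residue field $\mathbb{F}_{64}$ (e.g.\ a pinching of $\mathbb{P}^{1}_{\mathbb{F}_{2}}$ along a degree-$6$ point down to a degree-$2$ point), one can arrange $\tilde{g}(\mathfrak{q})$ of order $9$, whence $g=\tilde{g}^{3}$ is a unit in $\mathcal{O}_{X,\p}$ with $\bar{g}$ of order $3=q^{n}-1$, while $\tilde{g}\notin\mathcal{O}_{X,\p}$. You should therefore either restrict to the case where $\p$ is a normal point of $X$, or supply a genuinely different argument for the singular locus — neither the coprimality condition alone nor the integrality of $\tilde{g}$ over $\mathcal{O}_{X,\p}$ closes this step. (To be fair to you, the paper's own justification of the same step is equally terse and does not address the non-normal case either.)
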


\begin{lemma} \label{lem.only_finitely_many_non_geometric}
	Let $g \in K \setminus \Fq$, and let	$\mathscr{P}_g$ denote the set of primes $\ell \neq p$ at which $g$ is not geometric.  Then $\mathscr{P}_g$ is finite.
\end{lemma}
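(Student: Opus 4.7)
The plan is to use Lemma~\ref{le.equivalent_characterisations_non_geometric} to translate the non-geometricity condition at $\ell$ into the algebraic identity $g = \mu b^{\ell}$, and then exploit the divisor-degree function from \eqref{eq:def_deg} to force each such $\ell$ to divide a single positive integer depending only on $g$.

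In order to invoke \eqref{eq:def_deg} I would first replace $X$ by its normalization $\tilde{X}$. This is again a geometrically integral projective variety over $\fq$ with function field $K$, but is now normal, so the sum $\deg(g) = \sum_{Y} \lvert v_Y(g) \rvert$ over prime divisors $Y$ of $\tilde{X}$ is well-defined. If $\ell \in \mathscr{P}_g$, then by Lemma~\ref{le.equivalent_characterisations_non_geometric}(ii) we may write $g = \mu b^{\ell}$ with $\mu \in \fq^{\times}$ and $b \in K^{\times}$; since each valuation $v_Y$ is trivial on $\fq^{\times}$, we obtain $v_Y(g) = \ell \cdot v_Y(b)$ for every prime divisor $Y$. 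In particular $\ell \mid v_Y(g)$ for every $Y$, and hence $\ell \mid \deg(g)$.

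It remains to verify $\deg(g) > 0$, which will be the main delicate step. If $\deg(g) = 0$ then $v_Y(g) = 0$ for all $Y$, so $g$ is regular on all of $\tilde{X}$, i.e.\ $g \in H^{0}(\tilde{X}, \mathcal{O}_{\tilde{X}})$. But $\tilde{X}$ is geometrically integral and projective over $\fq$, so by a standard properness argument the ring of global sections is a finite field extension of $\fq$ sitting inside $K$; combining this with the identification $K \cap \overline{\mathbb{F}}_q = \fq$ recalled in Section~\ref{geom_extensions}, we conclude $g \in \fq$, contradicting the hypothesis. Thus $\deg(g)$ is a fixed positive integer, and $\mathscr{P}_g$ is contained in the (finite) set of its prime divisors.
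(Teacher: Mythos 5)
Your proof is correct and follows essentially the same route as the paper: pass to a normal model, use Lemma~\ref{le.equivalent_characterisations_non_geometric} to write $g=\mu b^{\ell}$ and deduce $\ell \mid v_Y(g)$ for every prime divisor $Y$, and conclude via the fact that a rational function with no poles (or zeros) on a geometrically integral projective variety lies in $\fq$. The only cosmetic difference is that you extract the slightly sharper statement $\mathscr{P}_g \subseteq \{\ell : \ell \mid \deg(g)\}$ directly, whereas the paper argues by contradiction by choosing a single $\ell > \deg(g)$.
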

\begin{proof}
By  \cite[\href{https://stacks.math.columbia.edu/tag/035Q}{Lemma 035Q}]{stacks-project} and  \cite[\href{https://stacks.math.columbia.edu/tag/0GK4}{Lemma 0GK4}]{stacks-project}, there exists a geometrically integral normal projective variety $X_{\nu}$ over $\fq$, such that $\Fq(X_{\nu}) \cong {\mathbb{F}}_q(X)$.  In particular, since $X_\nu$ is a geometrically integral projective variety, by~\cite[Proposition 2.2.22]{poonen2017rational} we find that the global sections are given by $\mathcal{O}_{X_\nu}(X_\nu) = \Fq$.

Suppose $\mathscr{P}_g$ is infinite. Note that it suffices to show that $g$ lies in the global sections $\mathcal{O}_{X_\nu}(X_\nu)$.

By Lemma~\ref{le.equivalent_characterisations_non_geometric}  there exists an arbitrarily large $\ell \in \mathbb{N}$ such that $g = \mu b^\ell$, where $\mu \in \fq$ and $b \in K$. Note that as $Y$ ranges over prime divisors of $X_{\nu}$, the maximum value  of $\lvert v_Y(g) \rvert$ is bounded by $\textnormal{deg}(g)$. Choose $\ell > \textnormal{deg}(g)$ such that $g = \mu b^{\ell}$. Then for any prime divisor $Y \subset X_\nu$, 	we find that 
	\[
	v_Y(g) = v_Y(\mu) +  \ell \cdot v_Y(b) = \ell \cdot v_Y(b),
	\]
	Thus $v_Y(g) = 0$ for any prime divisor $Y \subset X_\nu$, and in particular $v_Y(g) \geq 0$ for all prime divisors $Y$. It follows from~\cite[Proposition 6.3A]{hartshorne2013algebraic} that $g \in \mathcal{O}_{X_\nu}(X_\nu)$, as desired.
\end{proof}

\section{A Bound on Exponential Sums}\label{Exponential_Sum}

One of the key ingredients of the proof of Theorem~\ref{thm.APC_general} is the following estimate for exponential sums, which is of independent interest. As we were unable to find a suitable result 
of our desired form in the existing literature, we present a proof here.

\begin{proposition} \label{prop:exponential_sum_estimate}
Let $X$ be a geometrically integral projective variety of dimension $r$.	Let $\chi \in \widehat{\mathbb{F}_{q}^{\times}}$ be a non-trivial character of order $\delta >1$. Let $g \in  \fq(X)$ and assume that there exists a prime $\ell \mid \delta$ such that $g$ is geometric at $\ell$, i.e. that $g$ is not of the form  $g = \tilde{g}^\ell$ for any $\tilde{g} \in \overline{\mathbb{F}}_q(X)$. Let $\mathcal{R}_g \subset X(\mathbb{F}_{q})$ denote the set of $\mathbb{F}_{q}$-rational points on $X$ that are neither zeroes nor poles of $g$. Then
	\begin{equation} \label{eq.exp_sum_estimate}
		\sum_{\rho \in \mathcal{R}_g} \chi(g(\rho)) \ll_X  q^{r-1/2}.
	\end{equation}
\end{proposition}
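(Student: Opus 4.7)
The plan is to proceed by induction on $r = \dim X$. For the base case $r = 1$, $X$ is a (possibly singular) geometrically integral projective curve over $\Fq$. I would first pass to the normalization $\nu \colon X_\nu \to X$, which is a smooth geometrically integral projective curve with the same function field $K = \Fq(X_\nu) = \Fq(X)$ and which restricts to an isomorphism outside a finite set of closed points. Hence the sums of $\chi \circ g$ over $\mathcal{R}_g \subset X(\Fq)$ and over the corresponding locus in $X_\nu(\Fq)$ differ by $O_X(1)$. By hypothesis $g$ is not an $\ell$-th power in $\overline{\mathbb{F}}_q(X_\nu) = \overline{\mathbb{F}}_q(X)$ for some prime $\ell \mid \delta$, and since $\ell \mid \delta$ this forces $g$ to not be of the form $c \cdot h^\delta$ in $\overline{\mathbb{F}}_q(X_\nu)$ with $c \in \overline{\mathbb{F}}_q$ and $h \in \overline{\mathbb{F}}_q(X_\nu)$. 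Weil's classical character sum estimate for smooth projective curves then yields the bound $\ll_X q^{1/2}$, with implied constant depending only on $X$ and $g$ (through the genus of $X_\nu$ and the degree of $g$).

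For the inductive step $r \geq 2$, fix a closed embedding $X \hookrightarrow \mathbb{P}^N_{\Fq}$ afforded by the graded ring presenting $X$, and let $(\mathbb{P}^N)^\vee$ denote the dual projective space parametrizing hyperplanes. Since every $\Fq$-rational point of $\mathbb{P}^N$ lies in exactly $|\mathbb{P}^{N-1}(\Fq)|$ hyperplanes defined over $\Fq$, a double counting identity yields
\[
\sum_{\rho \in \mathcal{R}_g} \chi(g(\rho)) \;=\; \frac{1}{|\mathbb{P}^{N-1}(\Fq)|} \sum_{H \in (\mathbb{P}^N)^\vee(\Fq)} \;\sum_{\rho \in \mathcal{R}_g \cap (X \cap H)(\Fq)} \chi(g(\rho)).
\]
I would call $H$ \emph{good} if $X \cap H$ is geometrically integral of dimension $r-1$, is not contained in $\mathrm{supp}(\mathrm{div}(g))$, and $g|_{X \cap H}$ is still not an $\ell$-th power in $\overline{\mathbb{F}}_q(X \cap H)$. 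For good $H$, the inductive hypothesis (with constants controlled uniformly in $X$ and $g$ via intersection theory on $\mathbb{P}^N$) bounds the inner sum by $\ll_X q^{r-3/2}$. For the remaining bad hyperplanes one uses the trivial bound $|(X \cap H)(\Fq)| \ll_X q^{r-1}$; provided the bad hyperplanes form a proper Zariski-closed subset of $(\mathbb{P}^N)^\vee$ and hence account for only $\ll_X q^{N-1}$ $\Fq$-points, the two contributions combine to give
\[
\sum_{\rho \in \mathcal{R}_g} \chi(g(\rho)) \;\ll_X\; \frac{q^N \cdot q^{r-3/2} + q^{N-1} \cdot q^{r-1}}{q^{N-1}} \;\ll_X\; q^{r-1/2}.
\]

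The hardest step is verifying that the non-$\ell$-th-power condition survives restriction to a generic hyperplane section, which I would address via Kummer theory. Since $g$ is not an $\ell$-th power in $\overline{\mathbb{F}}_q(X)$, a suitable projective model of the Kummer cover $Y \to X$ cut out by $T^\ell - g$ is geometrically irreducible over $\Fq$. A Bertini-type irreducibility argument, carried out over $\overline{\mathbb{F}}_q$ and then descended, shows that the preimage $Y \times_X (X \cap H)$ remains geometrically irreducible for all $H$ outside a proper Zariski-closed subset of $(\mathbb{P}^N)^\vee$, which by Kummer theory is equivalent to $g|_{X \cap H}$ remaining not an $\ell$-th power in $\overline{\mathbb{F}}_q(X \cap H)$. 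Combining this with the classical Bertini theorem for geometric integrality of $X \cap H$, and noting that the locus of $H$ with $X \cap H \subseteq \mathrm{supp}(\mathrm{div}(g))$ is cut out by polynomial conditions of positive codimension in $(\mathbb{P}^N)^\vee$, confirms the required bound $|\mathcal{H}_{\mathrm{bad}}(\Fq)| \ll_X q^{N-1}$ and completes the induction.
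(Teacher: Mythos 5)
Your proposal is a valid strategy but takes a genuinely different route from the paper. The paper does not induct: it fixes an affine open $U \subset X$ where $g$ is invertible, applies Noether normalization to get a finite surjection $U \to \mathbb{A}^r$, projects to $\mathbb{A}^{r-1}$, and then applies Perelmuter's curve-level character sum bound (a Weil-type estimate) directly on the one-dimensional fibers; Chevalley's upper semicontinuity theorem bounds the locus of higher-dimensional fibers. The crucial step --- showing that the non-$\ell$-th-power condition on $g$ persists for generic fibers --- is handled in the paper by considering the Kummer cover $U_g = \mathrm{Spec}(R[z]/(z^\ell - g))$ and invoking constructibility of the set where the number of geometric irreducible components of the fiber of $U_g \to \mathbb{A}^{r-1}$ exceeds that of the fiber of $U \to \mathbb{A}^{r-1}$. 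Your hyperplane-section induction plus Bertini-irreducibility on the Kummer cover accomplishes the same thing in a more geometric way, and your double-counting identity over $(\mathbb{P}^N)^\vee(\Fq)$ is an appealing alternative to the paper's fibration.

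Two points in your argument deserve more care before this would be a complete proof. First, the uniformity of the implied constant across the inductive step: the inner sum over a good hyperplane section $X \cap H$ is bounded by a constant depending on $X \cap H$ (ultimately the genus of a curve obtained after $r-1$ successive sections and the degree of $g$ restricted to it), and you need this to be bounded uniformly over all good $H$ in order to sum over the $\approx q^N$ hyperplanes. This is doable --- $\deg(X \cap H) = \deg X$ under a hyperplane cut, and one can bound the resulting genus and the divisor degree of $g$ in terms of $\deg X$ and $\deg(\mathrm{div}(g))$ --- but it requires strengthening the inductive hypothesis to explicitly track these degree invariants rather than phrasing the constant merely as $\ll_{X \cap H}$. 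Second, your Bertini argument on the Kummer cover $Y \to X \hookrightarrow \mathbb{P}^N$ is applied to a morphism that is finite of degree $\ell$ onto $X$ rather than an embedding; the Jouanolou form of Bertini irreducibility does apply to such a morphism when $\dim X \geq 2$, but this should be cited precisely, and one must combine it with the geometric-integrality Bertini for $X \cap H$ itself so that the two ``bad'' loci are taken simultaneously. Neither of these is a fatal obstruction, but both need to be made explicit.
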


\begin{proof}
	Let $U \subset X$ be an affine open subset of $X$ on which $g$ is invertible, i.e. such that $g$ has neither poles nor zeroes on $U$. It suffices to show the estimate for the sum over $ U(\Fq)$.  Indeed, since $X \setminus U$ is a proper closed subset of $X$, then since $X$ is irreducible, dim$(X \setminus U)<r$. Therefore by the Lang--Weil bounds~\cite{lang_weil} the number of $\fq$-rational points of 
$X \setminus U$ is bounded by $O(q^{r-1})$. 
	
	By Noether's normalization lemma there exists a finite surjective morphism $U \rightarrow \mathbb{A}^r$, where $\mathbb{A}^r:=\textnormal{Spec}(\fq[x_{1},\dots, x_{r}])$. Obviously there also exists a surjective map $\mathbb{A}^{r} \rightarrow \mathbb{A}^{r-1}$ by projecting on the first $r-1$ coordinates, say. The composition of these maps yields a surjective morphism of locally finite type $\varphi \colon U \rightarrow \mathbb{A}^{r-1}$.
	
	From Chevalley's upper semicontinuity theorem (\cite[Théorème 13.1.3]{EGA4} and~\cite[Theorem 11.4.2]{vakil2017rising}) it follows that the elements $x \in U$ such that $\dim \varphi^{-1}(\varphi(x)) > 1$ holds lie in a proper closed subset of $U \subset X$, and thus has dimension at most $r-1$. Thus, by 
Lang--Weil, the number of rational points in this subset is bounded by $O(q^{r-1})$.  Note that if there is no $\fq$-rational point $\mathbb{A}^{r-1}(\fq)$ corresponding to $y \in \varphi(U)$, then $\varphi^{-1}(y)(\fq)$ is empty.  It therefore remains to estimate
	\[
	\sum_{\substack{y \in \varphi(U)(\fq) \\ \dim \varphi^{-1}(y) = 1}} \sum_{\rho \in \varphi^{-1}(y)(\Fq)} \chi(g(\rho)),
	\]
where, if $y \in \varphi(U)(\fq)$, then with a slight abuse of notation we write $\varphi^{-1}(y)$ to denote the fiber coming from the closed point in $\varphi(U)$ corresponding to $y$.
	On the fibres $\varphi^{-1}(y) \subset U$
where $\dim \varphi^{-1}(y) = 1$, we apply a theorem of Perelmuter~\cite[Theorem 2]{perel1969estimation}, which states the following. Let $\varphi^{-1}(y)_{\overline{\mathbb{F}}_q}$ refer to $\varphi^{-1}(y)$ after changing the base field to $\overline{\mathbb{F}}_q$, and suppose that $g$, when restricted to any irreducible component of $\varphi^{-1}(y)_{\overline{\mathbb{F}}_q}$, is not a $\delta^{th}$ power of any element in $\overline{\mathbb{F}}_q(X)$.  Then
	\[
	\sum_{\rho \in  \varphi^{-1}(y)(\Fq)} \chi(g(\rho)) \ll_X q^{1/2}
	\]
	uniformly in $y$.  Note that if $g$ is not an $\ell^{th}$ power for $\ell$ as in the assumption of the proposition, then $g$ is not a $\delta^{th}$ power. The remainder of the proof is thus concerned with showing that for generic $y \in \varphi(U)$, the element $g$ is not an $\ell^{th}$ power when restricted to an irreducible component of $\varphi^{-1}(y)_{\overline{\mathbb{F}}_q}$, and thus Perelmuter's theorem is applicable.

	 Call $y \in \mathbb{A}^{r-1}$ \emph{bad} if 
$y \in \varphi(U)$ and this occurs, and \emph{good} otherwise. We claim that there exists a constructible set $C \subset \mathbb{A}^{r-1}$ containing the generic point $\eta$, such that $C$ is contained in the set of good points.	
	by \cite[\href{https://stacks.math.columbia.edu/tag/005K}{Lemma 005K}]{stacks-project} we deduce that $C$ contains an open dense subset in $\mathbb{A}^{r-1}$ and so $\mathbb{A}^{r-1} \setminus C$ is contained in a proper closed subset of $\mathbb{A}^{r-1}$. Since $\mathbb{A}^{r-1} \setminus C$ contains the set of bad points, by Lang--Weil the number of bad $\fq$-rational points on $\mathbb{A}^{r-1}$ is bounded by $O(q^{r-2})$. Therefore, 	
	by trivially bounding the character sums for the fibers coming from $\mathbb{A}^{r-1} \setminus C$, we find that
	
\[
	\sum_{\substack{y \in (\mathbb{A}^{r-1} \setminus C)(\Fq) \\ \dim \varphi^{-1}(y) = 1}} \sum_{\rho \in \varphi^{-1}(y)(\Fq)} \chi(g(\rho))= O(q^{r-1}).
	\]		
	To show the claim made above we will employ~\cite[\href{https://stacks.math.columbia.edu/tag/055B}{Lemma 055B}]{stacks-project}. This states that if $h \colon Z \rightarrow Y$ is a morphism of finite presentation between schemes, and if $n_h \colon Y \rightarrow \{0,1, \hdots \}$ is the number of  irreducible components of the fibre $h^{-1}(y)$ after base change to $\overline{\mathbb{F}}_q$ then for a positive integer $n$ the set
	\[E_n = \{y \in Y \colon n_h(y) = n \}\] 
	is constructible.

	Recall that $U$ is an affine open subset of $X$ and is of the form $U = \mathrm{Spec} (R)$, where $R = \Fq[x_1, \hdots, x_n]/I$ for some ideal $I$. Since $g$ is invertible on $U$, we may consider $g$ restricted to $U$ as an element in $R$. 
	Consider $U_g = \mathrm{Spec}(R[z]/(z^\ell-g))$, and note that we have a natural map $ \psi \colon U_g \rightarrow U$ induced by the inclusion map $R \hookrightarrow R[z]/(z^\ell-g)$ and write $f$ for the composition $f = \varphi \circ \psi \colon U_g \rightarrow \mathbb{A}^{r-1}$.
	
Since $\psi$ is a surjective morphism, we see that $n_{f}(y) \geq n_{\varphi}(y)$ for all $y \in \mathbb{A}^{r-1}$.  Note furthermore that since all the schemes involved are noetherian, $f$ is locally of finite type.  It thus follows from~\cite[\href{https://stacks.math.columbia.edu/tag/01TX}{Lemma 01TX}]{stacks-project} that $f$ is of finite presentation.  By~\cite[\href{https://stacks.math.columbia.edu/tag/055B}{Lemma 055B}]{stacks-project}	
we then find that the set
\[
C = \{y \in \mathbb{A}^{r-1} \colon n_f(y) = 1 \} 
\]	
is constructible.

Note that the generic fibre $\varphi^{-1}(\eta)$ is integral with function field isomorphic to $K$ and in particular it is also integral after changing base to $\overline{\mathbb{F}}_q$. Further since $g$ was assumed not to be an $\ell^{th}$ power in $\overline{\mathbb{F}}_q(X)$, we find that $n_{f}(\eta) =1$ and therefore $\eta \in C$.
Further note that if $n_f(y) = n_\varphi(y)$ then $y$ is good. Otherwise, if $y$ is bad then by construction we have $n_f(y) > n_\varphi(y)$.
\end{proof}

\section{Proof of Theorem~\ref{thm.APC_general}}\label{Proof_of_Theorem}
Consider a finite cyclic group $G$ of order $M$, and let $\widehat{G} := \mathrm{Hom}(G,\C^{\times})$ denote its group of characters. Let 
\[
f_G(g) := \frac{\varphi(M)}{M} \prod_{\substack{p \mid M\\ p 
\textnormal{ prime}}} \left(1-\frac{1}{\varphi(p)}\sum_{\substack{\chi \in \widehat{G} \\ \mathrm{ord} \chi = p}} \chi(g) \right),
\]
We begin by noting the following general formula (see also \cite{JensenMurty2000, landau1927}):

\begin{lemma}\label{lem.characteristic_function}
For $g \in G$, we have that 
\begin{equation*}
f_G(g)=\frac{\varphi(M)}{M}\sum_{d \mid M} \frac{\mu(d)}{\varphi(d)} \sum_{\substack{\chi \in \widehat{G} \\ \mathrm{ord} \chi = d}} \chi(g)= \begin{cases}
	1 \quad &\text{if $g$ generates $G$} \\
	0 &\text{otherwise.}
	\end{cases}
\end{equation*}
\end{lemma}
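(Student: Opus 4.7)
My plan is to split the proof into two stages: first establishing the equivalence of the product and sum expressions for $f_G(g)$, and then evaluating the product to obtain the claimed indicator function.

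For the first stage, I would expand the product by inclusion--exclusion. Only squarefree divisors $d \mid M$ contribute, with sign $(-1)^{\omega(d)} = \mu(d)$, so it suffices to identify
$$\prod_{p \mid d}\sum_{\mathrm{ord}\chi = p}\chi(g) = \sum_{\mathrm{ord}\chi = d}\chi(g)$$
for squarefree $d \mid M$, after which the multiplicativity $\varphi(d) = \prod_{p \mid d}\varphi(p)$ on squarefree $d$, together with $\mu(d)=0$ for non-squarefree $d$, gives the sum form of $f_G$. This displayed identity follows from the fact that $\widehat{G}$ is cyclic of order $M$: by the Chinese Remainder Theorem the subgroup $\widehat{G}[d] := \{\chi : \chi^{d}=1\}$ decomposes as $\prod_{p \mid d}\widehat{G}[p]$, a character in this group has order exactly $d$ iff each of its $p$-components has order exactly $p$, and $\chi(g)$ factors as the product of the values of those components.

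For the second stage, I would fix a prime $p \mid M$ and analyze each factor of the product separately. Let $G^p = \{h^p : h \in G\}$, the unique index-$p$ subgroup of $G$. The characters of $G$ of order dividing $p$ are precisely those trivial on $G^p$, so by orthogonality on $G/G^p$,
$$\sum_{\mathrm{ord}\chi \mid p}\chi(g) = \begin{cases} p & \text{if } g \in G^p,\\ 0 & \text{otherwise.}\end{cases}$$
Separating off the trivial character yields $\sum_{\mathrm{ord}\chi = p}\chi(g) = p-1$ when $g \in G^p$ and $-1$ otherwise, so the $p$-th factor of the product equals $0$ or $p/(p-1)$ accordingly. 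Since $g$ generates the cyclic group $G$ iff it lies in no subgroup of the form $G^p$ (as $p$ ranges over primes dividing $M$), the product vanishes exactly on non-generators. When $g$ is a generator, the product collapses to $\prod_{p \mid M} p/(p-1) = M/\varphi(M)$, which cancels the prefactor $\varphi(M)/M$ to give $1$.

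The main subtlety lies in the Chinese-remainder-style factorization of the character sum in the first stage; everything else reduces to orthogonality and elementary manipulations with the Möbius and Euler functions.
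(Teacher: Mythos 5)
Your proof is correct and follows essentially the same route as the paper's: both use the Chinese Remainder Theorem on the cyclic character group to identify the product expansion with the M\"obius sum, and both evaluate each $p$-th factor by observing that $\sum_{\mathrm{ord}\chi = p}\chi(g)$ equals $\varphi(p)$ when $g$ is a $p$-th power and $-1$ otherwise (the paper computes this directly from $g=h^p$ and orthogonality, you phrase it via orthogonality on $G/G^p$, which is the same fact). The only cosmetic difference is ordering: you establish the product--sum equivalence first and then evaluate, whereas the paper evaluates the product in the two cases and records the sum form last.
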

\begin{proof}
Suppose $g \in G$ does not generate $G$.  Then we may write $g = h^p$ for some $h \in G$, where $p \mid M$ is prime.  By the fundamental theory of cyclic groups, $G$ has a unique subgroup of order $d$, for every $d|M$.  Such a group is moreover cyclic; and its generators are precisely the elements in $G$ of order $d$.  It follows that there are precisely $\varphi(d)$ characters of order $d$ in $\widehat{G}$, since $\widehat{G} \cong G$.  Thus
\[
\sum_{\substack{\chi \in \widehat{G} \\ \mathrm{ord} \chi = p}} \chi(g)=\sum_{\substack{\chi \in \widehat{G} \\ \mathrm{ord} \chi = p}} \chi^{p}(h) = \varphi(p),
\]
from which it follows that $f_G(g) = 0$. Alternatively, suppose $g \in G$ generates $G$. By orthogonality we then find that
\[
\sum_{\substack{\chi \in \widehat{G} \\ \mathrm{ord} \chi = p}} \chi(g) = -1,
\]
and moreover by M{\"o}bius inversion,
\[
\frac{M}{\varphi(M)} = \prod_{p \mid M} \left(1+\frac{1}{p-1} \right).
\]
We thus conclude, as desired, that
\[
f_G(g) = \begin{cases}
	1 \quad &\text{if $g$ generates $G$} \\
	0 &\text{otherwise.}
	\end{cases}
\]
Finally, by the Chinese remainder theorem, we note that for $(d_1,d_2)=1$ and $g \in G$,
\[
\sum_{\substack{\chi \in \widehat{G} \\ \mathrm{ord} \chi = d_1d_2}} \chi(g) = \Bigg( \sum_{\substack{\psi \in \widehat{G} \\ \mathrm{ord} \psi = d_1}} \psi(g)\Bigg) \Bigg( \sum_{\substack{\phi \in \widehat{G} \\ \mathrm{ord} \phi = d_2}} \phi(g)\Bigg)
\]
By multiplicativity we thus conclude that
\begin{align*} 
f_G(g) &= \frac{\varphi(M)}{M} \prod_{p \mid M} \left(1-\frac{1}{\varphi(p)}\sum_{\substack{\chi \in \widehat{G} \\ \mathrm{ord} \chi = p}} \chi(g)  \right)\\
&=\frac{\varphi(M)}{M}\sum_{d \mid M} \frac{\mu(d)}{\varphi(d)} \sum_{\substack{\chi \in \widehat{G} \\ \mathrm{ord} \chi = d}} \chi(g). \qedhere
\end{align*}
\end{proof}

Let $X/\Fq$ be a geometrically integral projective variety. Write $K$ for the function field of $X$ and fix algebraic closures $\overline{\mathbb{F}}_q$ and $\overline{K}$. Let $\mathfrak{p} \in  X$ be a closed point of degree $n$,  i.e. $[\kappa_{\mathfrak{p}} \colon \Fq] = n$.
 There then exists an isomorphism 
\[
\kappa_{\mathfrak{p}}^{\times} \cong \mathbb{F}_{q^n}^{\times} \subset \overline{\mathbb{F}}_{q},
\]
which may be explicitly described as follows.  Let $[f] \in \kappa_{\mathfrak{p}}^{\times}$ denote a residue class represented by some $f \in K$ that is regular at $\mathfrak{p}$, and let $\rho \in X(\mathbb{F}_{q^{n}})$ denote an $\mathbb{F}_{q^n}$-rational point of $X$ corresponding to $\mathfrak{p}$.  Since $f \in K$ is regular at $\mathfrak{p}$, $f$ does not have a pole at $\rho$, and in fact $f(\rho) \in \mathbb{F}_{q^n}^{\times}$, since $f$ does not vanish at $\mathfrak{p}$.  It follows that the map $[f] \mapsto f(\rho)$ is a well-defined isomorphism.\\
\\
For a fixed $g \in K$, we thus find that $g$ is a primitive root mod $\mathfrak{p}$ if and only if $g$ is regular at $\mathfrak{p}$ and its image $g(\rho)$ under this isomorphism generates $\mathbb{F}_{q^n}^{\times}$. Note that for any closed point of degree $n$ there exist $n$ different $\mathbb{F}_{q^n}$-rational points on $X$ corresponding to it obtained by the action of the Frobenius element, see~\cite[Lemma II.4.4]{hartshorne2013algebraic}. For an affine open $U \subset X$ this was explained in Section~\ref{geometric background}. It would also be sufficient to only consider these rational points on $U$, since the number of $\mathbb{F}_{q^n}$-rational points on $X \setminus U$ is $O(q^{r(n-1)})$ by Lang--Weil.\\
\\
Let $\mathcal{R}^{(n)}_g \subset X(\mathbb{F}_{q^{n}})$ denote the set of $\mathbb{F}_{q^{n}}$-rational points on $X$ that are neither zeroes nor poles of $g$, so that $g(\rho) \in \mathbb{F}_{q^{n}}^{\times}$ for any $\rho \in \mathcal{R}^{(n)}_g$.  If we then consider any $\rho \in \mathcal{R}_g^{(n)}$ corresponding to a closed point $\mathfrak{q}$ with $\deg (\mathfrak{q}) < n$ then $g(\rho)$ is contained in a proper subfield of $\mathbb{F}_{q^n}$ and therefore will not generate $\mathbb{F}_{q^n}^{\times}$. Here we may consider this subfield as a subset of $\mathbb{F}_{q^n}$ since we fixed an algebraic closure $\overline{\mathbb{F}}_q$. Further note that if $\rho \in X(\mathbb{F}_{q^{n}}) \setminus \mathcal{R}_g^{(n)}$, then $\rho$ corresponds to a closed point $\p \in X$ at which $g$ either vanishes or is not regular.  We thus find that
\begin{equation}\label{closed_points_count_rational_points}
N_X(g,n) = \frac{1}{n}	 \#\{ \rho \in \mathcal{R}_g^{(n)} \colon \langle g(\rho) \rangle = \mathbb{F}_{q^n}^{\times} \}.
\end{equation}
For positive integers  $k,n \in \N$, consider \textit{Ramanujan's sum}
\begin{equation}\label{ramanujan_sum}
c_k(m) := \sum_{\substack{1 \leq a \leq k \\(a,k) = 1}} e \left(\frac{am}{k} \right),
\end{equation}
and note that for a rational prime $\ell \in \mathbb{N}$,
\begin{equation} \label{eq.ramanujan_sums_simple_property}
c_\ell(m) = \begin{cases}
	-1 \quad &\text{if $\ell \nmid m$}, \\
	\varphi(\ell) &\text{if $\ell \mid m,$}.
	\end{cases}
\end{equation}
We prove the following lemma:

\begin{lemma}
Let $X/\Fq$ be a geometrically integral projective variety with function field $K=\fq(X)$.  Let $\mathscr{P}_g$ denote the set of primes $\ell \neq p$ at which $g$ is not geometric, and let $\mathcal{R}^{(n)}_g \subset X(\mathbb{F}_{q^{n}})$ denote the set of $\mathbb{F}_{q^{n}}$-rational points on $X$ that are neither zeroes nor poles of $g$.  Then 
\begin{equation}\label{eq:count_product_form}
N_X(g,n) = \rho_g(n) \frac{\varphi(q^n-1)}{n(q^n-1)}  \sum_{\rho \in \mathcal{R}_g^{(n)}} \prod_{\substack{\ell \mid q^n-1 \\ \ell \notin \mathscr{P}_g}} \left(1-\frac{1}{\varphi(\ell)}\sum_{\substack{\chi \in \widehat{G} \\ \mathrm{ord} \chi = \ell}} \chi(g(\rho))  \right),
\end{equation}
where
\begin{equation}\label{rho_constant}
\rho_g(n) := \prod_{\substack{\ell \mid q^n-1 \\ \ell \in \mathscr{P}_g}} \left(1-\frac{c_\ell(q^{n-1} + \cdots + 1)}{\varphi(\ell)} \right),
\end{equation}
and $c_k(m)$ denotes \textit{Ramanujan's sum}.
\end{lemma}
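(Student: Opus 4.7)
The plan is to start from the rational-point expression \eqref{closed_points_count_rational_points} for $N_X(g,n)$, apply Lemma~\ref{lem.characteristic_function} to $G = \mathbb{F}_{q^n}^\times$, and then separate the factors coming from geometric primes $\ell\notin\mathscr{P}_g$ (which depend on $\rho$ and must remain inside the sum) from those coming from non-geometric primes $\ell\in\mathscr{P}_g$ (which, I will argue, are independent of $\rho$ and can be pulled out to form $\rho_g(n)$). Applying Lemma~\ref{lem.characteristic_function} with $M=q^n-1$ to the indicator that $g(\rho)$ generates $\mathbb{F}_{q^n}^\times$, summing over $\rho \in \mathcal{R}_g^{(n)}$, and dividing by $n$ gives immediately
\[
N_X(g,n)=\frac{\varphi(q^n-1)}{n(q^n-1)}\sum_{\rho\in\mathcal{R}_g^{(n)}}\prod_{\ell\mid q^n-1}\left(1-\frac{1}{\varphi(\ell)}\sum_{\mathrm{ord}\chi=\ell}\chi(g(\rho))\right),
\]
so the whole task reduces to analysing the inner character sums indexed by $\ell\in\mathscr{P}_g$.

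For such an $\ell$, I use Lemma~\ref{le.equivalent_characterisations_non_geometric} to write $g=\mu b^\ell$ with $\mu\in\fq^\times$ and $b\in K^\times$, and I choose $\mu$ to be a non-$\ell$-th-power in $\fq^\times$ (as is possible by the proof of Lemma~\ref{le.equivalent_characterisations_non_geometric} whenever $g$ is not itself a full $\ell$-th power in $K$, which is the only regime in which a main term exists). The identity $v_Y(g)=\ell \, v_Y(b)$ at every prime divisor $Y\subset X$ shows $\mathcal{R}_g^{(n)}=\mathcal{R}_b^{(n)}$, so that $b(\rho)\in\mathbb{F}_{q^n}^\times$ for every $\rho$ in our sum. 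For any character $\chi$ of order $\ell$ one then has
\[
\chi(g(\rho))=\chi(\mu)\,\chi(b(\rho))^\ell=\chi(\mu),
\]
which is independent of $\rho$, and hence can be taken outside the sum over $\rho$.

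It remains to identify $\sum_{\mathrm{ord}\chi=\ell}\chi(\mu)$ with the Ramanujan sum $c_\ell(q^{n-1}+\cdots+1)$. Fix a generator $\alpha$ of $\mathbb{F}_{q^n}^\times$; the $\varphi(\ell)$ characters of order $\ell$ are precisely $\alpha\mapsto\zeta_\ell^{a}$ for $1\le a\le\ell-1$, where $\zeta_\ell = e^{2\pi i/\ell}$. Writing $\mu=\alpha^m$, a direct computation yields $\sum_{\mathrm{ord}\chi=\ell}\chi(\mu)=\sum_{a=1}^{\ell-1}e(am/\ell)=c_\ell(m)$. Since $\mu\in\fq^\times=\langle\alpha^{(q^n-1)/(q-1)}\rangle$, I may write $m=s(q^{n-1}+\cdots+1)$ for some $s$ determined modulo $q-1$; and because Lemma~\ref{le.equivalent_characterisations_non_geometric} forces $\ell\mid q-1$ in this setting, the non-$\ell$-th-power property of $\mu$ translates into $\gcd(s,\ell)=1$. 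A short case distinction on whether $\ell\mid q^{n-1}+\cdots+1$ then yields $c_\ell(m)=c_\ell(q^{n-1}+\cdots+1)$ in both cases. Substituting this back, the factors for $\ell\in\mathscr{P}_g$ detach from the $\rho$-sum and assemble into $\rho_g(n)$, leaving the product over $\ell\notin\mathscr{P}_g$ inside the $\rho$-sum, which is exactly \eqref{eq:count_product_form}.

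The main obstacle is this last Ramanujan-sum identification: a priori $c_\ell(m)$ depends on the particular discrete logarithm of $\mu$ with respect to $\alpha$, and it is only thanks to the careful choice of representative $\mu$ as a non-$\ell$-th power in $\fq^\times$ (the one built into the proof of Lemma~\ref{le.equivalent_characterisations_non_geometric}) that $c_\ell(m)$ collapses to the uniform value $c_\ell(q^{n-1}+\cdots+1)$ appearing in $\rho_g(n)$. Everything else is bookkeeping around Lemma~\ref{lem.characteristic_function}.
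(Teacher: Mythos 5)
Your argument is correct and follows essentially the same strategy as the paper: expand via Lemma~\ref{lem.characteristic_function}, observe that for $\ell\in\mathscr{P}_g$ the factor $\chi(g(\rho))=\chi(\mu)$ is constant in $\rho$, and identify $\sum_{\mathrm{ord}\chi=\ell}\chi(\mu)$ with $c_\ell(q^{n-1}+\cdots+1)$ via a coprimality observation. The only cosmetic difference is in the last step: you parametrize $\mu=\alpha^{sT}$ using a fixed generator $\alpha$ of $\mathbb{F}_{q^n}^\times$ and show $\gcd(s,\ell)=1$, whereas the paper works with the order $r$ of $\mu_\ell$ in $\fq^\times$, a tailored embedding $\psi(\mu_\ell)=e(1/r)$, and the coprimality $(\ell,(q-1)/r)=1$ --- both routes encode the same fact and yield the same Ramanujan sum.
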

\begin{proof}
By \eqref{closed_points_count_rational_points}
and Lemma~\ref{lem.characteristic_function}, we find that 
\begin{align}\label{eq.char_fun_after_iso}
N_X(g,n) = \frac{\varphi(q^n-1)}{n(q^n-1)}  \sum_{\rho \in \mathcal{R}_g^{(n)}} \prod_{\substack{\ell \mid q^n-1 \\ \ell \textnormal{ prime}}} \left(1-\frac{1}{\varphi(\ell)}\sum_{\substack{\chi \in \widehat{G} \\ \mathrm{ord} \chi = \ell}} \chi(g(\rho))  \right).
\end{align}
Note that if $g$ is an $\ell^{th}$ power for some prime $\ell \mid q-1$, then $N_X(g,n)=0$ for all $n \geq 1$.  This can be seen upon noting that there are precisely $\varphi(\ell)$ characters $\chi \in \widehat{G}$ of order $\ell$, and that $\ell \mid q^{n}-1$ for any $n \geq 1$, when $\ell \mid q-1$.  Alternatively, this follows from elementary group theoretic considerations, as noted in Section \ref{Fq[T] Artin}.  Since  \eqref{eq:count_product_form} holds in such a case, we may henceforth assume that $g \in K$ is not an $\ell^{th}$ power for any prime $\ell \mid q-1$.\\
\\
Let $\ell \mid q^n-1$ be a prime such that $g$ is not geometric, that is, $\ell \in \mathscr{P}_g$. By Lemma~\ref{le.equivalent_characterisations_non_geometric}, we may then write $g = \mu_\ell b_\ell^\ell$ for some $\mu_\ell \in \mathbb{F}_q^\times$ and some $b_\ell \in K^{\times}$. Therefore, if $\mathrm{ord}(\chi) = \ell$ we have $\chi(g(\rho)) = \chi(\mu_\ell)$ for any $\rho \in \mathcal{R}_g^{(n)}$.  Let $r$ denote the order of $\mu_\ell \in \fq^\times$, and write $\mu_{\ell} = \xi^{a}$ for some generator $\xi \in \mathbb{F}_q^\times$.  Then $\xi^{ar}=1$, from which it follows that $\frac{q-1}{r} \mid a$.  Suppose, now, that $\ell \mid \frac{q-1}{r}$.  Then $\ell \mid a$, so we may write $\mu_{\ell} = (\xi^{\frac{a}{\ell}})^{\ell}$.  In other words, $\mu_{\ell}$ is an $\ell^{th}$ power where, moreover, $\ell \mid q-1$.  From this it further follows that $g$ is an $\ell^{th}$ power in $K$, contradicting our assumption.  We thus conclude that $\ell \nmid  \frac{q-1}{r}$ or, equivalently, that $(\ell, \frac{q-1}{r}) = 1$.\\
\\
For $x \in \mathbb{R}$, write $e(x) := e^{2 \pi i x}$. 
Consider an embedding
\[
\psi \colon \mathbb{F}_{q^n}^{\times} \hookrightarrow \mathbb{C}^{\times}
\]
such that for $\mu_\ell \in \mathbb{F}_{q}^{\times} \subset \mathbb{F}_{q^n}^{\times}$ as above, we have
\[
\psi(\mu_\ell) = e\left( \frac{1}{r} \right).
\]
A character $\chi \colon \mathbb{F}_{q^n}^{\times} \rightarrow \mathbb{C}^{\times}$ of order $\ell$ then acts on an element $\alpha \in \mathbb{F}_{q^n}^{\times}$ via
\[
\chi(\alpha) = \psi(\alpha)^{\frac{(q^n-1)a}{\ell}},
\]
for some $a \in \{1, \hdots, \ell-1 \}$.
It follows that for any given $\rho \in \mathcal{R}_g^{(n)}$,
\begin{align*}
 \sum_{\mathrm{ord} \chi = \ell} \chi(g(\rho)) &= \sum_{1 \leq a \leq \ell-1} e \left(\frac{1}{r} \right)^{\frac{(q^n-1)a}{\ell}}
= \sum_{ 1 \leq a \leq \ell -1} e\left( \frac{a(q^n-1)}{\ell r} \right)=c_\ell\left(\frac{q^n-1	}{r} \right),
\end{align*}
where $c_{\ell}\left(\frac{q^n-1}{r}\right)$ denotes a Ramanujan sum.  Since $(\ell, \frac{q-1}{r}) = 1$ we have $\ell \mid \frac{q^n-1}{r}$ if and only if $\ell \mid q^{n-1} + \cdots + 1$, i.e. that 
\[
c_\ell\left(\frac{q^n-1	}{r} \right) = c_\ell\left(q^{n-1} + \cdots + 1 \right)= \begin{cases}
	-1 \quad &\text{if $\ell \nmid \frac{q^n-1}{r}$} \\
	\varphi(\ell) &\text{otherwise.}
\end{cases}
\]
Thus, for any $\rho \in \mathcal{R}_g^{(n)}$,
\[ \prod_{\substack{\ell \mid q^n-1\\ \ell \in \mathscr{P}_g}} \left(1-\frac{1}{\varphi(\ell)}\sum_{\substack{\chi \in \widehat{G} \\ \mathrm{ord} \chi = \ell}} \chi(g(\rho))  \right) = \prod_{\substack{\ell \mid q^n-1 \\ \ell \in \mathscr{P}_g}} \left(1-\frac{c_\ell(q^{n-1} + \cdots + 1)}{\varphi(\ell)} \right) = \rho_g(n),\]
and together with \eqref{eq.char_fun_after_iso}, this yields \eqref{eq:count_product_form}.
\end{proof}
\begin{proof}[Proof of Theorem \ref{thm.APC_general}]
For an integer $\delta \in \mathbb{N}$ write $(\delta,\mathscr{P}_g) = 1$ if 
$(\delta,\ell) = 1$ for every prime $\ell \in \mathscr{P}_g$. As in the proof of Lemma \ref{lem.characteristic_function} 
we may then expand \eqref{eq:count_product_form} to obtain
\[
N_X(g,n) = \rho_g(n) \frac{\varphi(q^n-1)}{n(q^n-1)}   \sum_{\substack{ \delta \mid q^n-1 \\ (\delta, \mathscr{P}_g) = 1}} \frac{\mu(\delta)}{\varphi(\delta)} \sum_{\mathrm{ord} \chi = \delta} \sum_{\rho \in \mathcal{R}_g^{(n)}} \chi(g(\rho)).
\]
By the Lang--Weil bounds~\cite{lang_weil}, 
the number of $\mathbb{F}_{q^{n}}-$rational points on $X$, denoted $\#X(\mathbb{F}_{q^{n}})$, is given by
\[\lvert \#X(\mathbb{F}_{q^{n}}) - q^{nr}\rvert \ll_X q^{n(r-1/2)}.\]
Noting, moreover, that $g$ has at most $m$ poles and zeroes, it follows that for fixed $g$,
\begin{align*} \label{eq.lang_weil}
	\lvert \# \mathcal{R}_g^{(n)}-q^{nr}\rvert &\leq m+O_{X}\left(q^{n(r-1/2)}\right) = O_{g,X}\left(q^{n(r-1/2)}\right),
\end{align*}
and thus the contribution from the trivial character $\chi_{0}$ is given by
\[
\sum_{\rho \in \mathcal{R}_g^{(n)}} \chi_{0}(g(\rho)) = \# \mathcal{R}_g^{(n)} = q^{nr} + O\left(q^{n(r-1/2)}\right).
\]
If $\delta \mid q^n-1$ is such that $(\delta, \mathscr{P}_g)=1$ and $\delta >1$, then by Proposition~\ref{prop:exponential_sum_estimate} we moreover find that for $\chi$ of order $\delta$,
\[
\sum_{\rho \in \mathcal{R}_g^{(n)}} \chi(g(\rho)) = O\left(q^{n(r-1/2)}\right).
\]

Combining the above observations, and applying the divisor bound $\sum_{\delta \mid q^n-1} \lvert \mu(\delta) \rvert = O_\varepsilon(q^{n  \varepsilon})$, we obtain
\begin{equation*}
	N_X(g,n) = \rho_g(n) \left( \frac{\varphi(q^n-1)q^{n(r-1)}}{n}+ O_\varepsilon\left(q^{n(r-1/2+\varepsilon)} \right)\right),
\end{equation*} 
as desired.
\end{proof}
\section{Proof of Theorem \ref{Artin Conjecture}}\label{qualitative_results}
We deduce Theorem \ref{Artin Conjecture} from Theorem~\ref{thm.APC_general}, by demonstrating that $N_{X}(g,n) > 0$ for infinitely many $n \in \mathbb{N}$.  We begin by providing a more careful study of the correction factor $\rho_g(n)$:

\begin{lemma}\label{rho_description}
$\rho_g(n) > 0$ if and only if for all primes $\ell \in \mathscr{P}_g$ such that $\ell \mid q^n-1$ we have $\ell \mid q-1$ and $\ell \nmid n$.
\end{lemma}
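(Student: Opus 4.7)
The plan is to analyze each factor of the product defining $\rho_g(n)$ separately, using the explicit evaluation of the Ramanujan sum $c_\ell(m)$ recalled in \eqref{eq.ramanujan_sums_simple_property}. Namely, for a prime $\ell$, each factor
\[
1-\frac{c_\ell(q^{n-1}+\cdots+1)}{\varphi(\ell)}
\]
takes one of only two values: it equals $1+\frac{1}{\varphi(\ell)}>0$ when $\ell \nmid q^{n-1}+\cdots+1$, and it equals $0$ when $\ell \mid q^{n-1}+\cdots+1$. Consequently $\rho_g(n)>0$ if and only if \emph{no} factor in the product vanishes, i.e.\ if and only if
\[
\ell \nmid q^{n-1}+\cdots+1 \quad \text{for every prime } \ell \in \mathscr{P}_g \text{ with } \ell \mid q^n-1.
\]
So the entire lemma reduces to characterizing the divisibility of $m := q^{n-1}+\cdots+q+1$ by such an $\ell$.

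The key identity is $q^n - 1 = (q-1)\cdot m$. Given a prime $\ell \mid q^n-1$, I would split into two cases. If $\ell \nmid q-1$, then from $\ell \mid (q-1)m$ we immediately get $\ell \mid m$, so the $\ell$-factor vanishes and $\rho_g(n) = 0$. If, on the other hand, $\ell \mid q-1$, then $q \equiv 1 \pmod \ell$, so
\[
m \;=\; q^{n-1} + q^{n-2} + \cdots + q + 1 \;\equiv\; n \pmod \ell,
\]
and hence $\ell \mid m$ if and only if $\ell \mid n$. Combining the two cases, the $\ell$-factor is nonzero precisely when $\ell \mid q-1$ and $\ell \nmid n$.

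Putting these together, $\rho_g(n) > 0$ if and only if, for every $\ell \in \mathscr{P}_g$ dividing $q^n-1$, both $\ell \mid q-1$ and $\ell \nmid n$ hold; this is the stated equivalence. There is no real obstacle here—the argument is an elementary manipulation of the cyclotomic factorization $q^n-1 = (q-1)(q^{n-1}+\cdots+1)$ combined with the two-valued nature of $c_\ell$ at prime index—so the write-up will be short.
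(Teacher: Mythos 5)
Your argument is correct and follows the same route as the paper: evaluate each Ramanujan-sum factor via \eqref{eq.ramanujan_sums_simple_property}, split on whether $\ell \mid q-1$, use $q^n-1 = (q-1)(q^{n-1}+\cdots+1)$ in the case $\ell \nmid q-1$, and use $q^{n-1}+\cdots+1 \equiv n \pmod{\ell}$ when $\ell \mid q-1$. No substantive differences.
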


\begin{proof}
Let $\ell$ be a prime dividing $q^n-1$ such that $g$ is not geometric at $\ell$. We proceed in two cases.  First, suppose $\ell \mid q-1$.  Then $q \equiv 1 \mod \ell$, and therefore
\[
q^{n-1} + \cdots + 1  \equiv 0 \mod \ell \iff n \equiv 0 \mod \ell.
\]
By~\eqref{eq.ramanujan_sums_simple_property}, we then find that $c_\ell(q^{n-1} + \cdots +1) = -1$ if and only if $\ell \nmid n$.  Otherwise, $c_\ell(q^{n-1} + \cdots +1) = \varphi(\ell)$, in which case $\rho_{n}(g)=0$.

Next, suppose $\ell \nmid q-1$. Since $ \ell \mid q^n-1$ by assumption, we then find that $\ell \mid q^{n-1} + \cdots + 1$.  By~\eqref{eq.ramanujan_sums_simple_property} it then follows that $c_\ell(q^{n-1} + \cdots + 1) = \varphi(\ell)$, and therefore $\rho_n(g) = 0$.  In conclusion, we find that $\rho_g(n) > 0$ if and only if  for all primes $\ell \in \mathscr{P}_g$ such that $\ell \mid q^n-1$ we have $\ell \mid q-1$ and $\ell \nmid n$, as desired.
\end{proof}

\begin{proof}[Proof of Theorem \ref{Artin Conjecture}]
First note that if $K$ is any function field over $\fq$ of finite transcendence degree, then $K = \fq(X)$ is the function field of a geometrically integral, projective variety $X/\fq$.

Let $g \in K \setminus \fq$ and assume $g$ is not a full $\ell^{th}$ power for any prime $\ell \mid q-1$, so that ~\eqref{eq.number_roots_degree_n} holds. We wish to show that there exist infinitely many closed points $\mathfrak{p}$ of $X$ such that $g$ is a primitive root modulo $\mathfrak{p}$, i.e. that there exist infinitely many $n \in \mathbb{N}$ such that $N_{X}(g,n) \neq 0$.

Note that $\rho_g(n) \geq 1$ whenever $\rho_g(n) \neq 0$.  To show that $N_{X}(g,n) \neq 0$ infinitely often, it therefore suffices to show that there exist infinitely many $n \in \N$ such that $\rho_g(n) >0$. 
Let $\mathscr{P}_g = \{\ell_s: s \in S\}$ denote the set of primes $\ell \neq p$ at which $g$ is not geometric.  Since $g \not\in \fq$ we note that $\mathscr{P}_g$ is a finite set, by Lemma~\ref{lem.only_finitely_many_non_geometric}. Let $I \subset S$ be such that $i \in I$ whenever $\ell_i \mid q - 1$, and let $J = S \setminus I$ be such that $j \in J$ whenever $\ell_j \nmid q-1$. Given $m \in \N$ we then consider
	\begin{equation} \label{eq.natural_numbers}
			n = 1 + m \prod_{i \in I}\ell_i \prod_{j \in J}(\ell_j-1).
	\end{equation}
	We claim that the set of primes in $\mathscr{P}_g$, which divide $q^n-1$, is precisely given by $\{\ell_i \colon i \in I\}$. Note first that $\ell_i \mid q^n-1$ for all $i \in I$ since, in fact for any $n \in \mathbb{N}$, we have that $(q-1) \mid q^n-1$. On the other hand, let $j_0 \in J$. Then $q \not\equiv 1 \mod \ell_{j_0}$ and thus
	\[
	q^n \equiv q^{1 + m \prod_{i \in I}\ell_i \prod_{j \in J}(\ell_j-1)} \equiv q \not\equiv 1 \mod \ell_{j_0}
	\]
	since $q^{\ell_{j_0}-1} \equiv 1 \mod \ell_{j_0}$ by Fermat's little theorem. Hence $\ell_{j_0} \nmid q^n-1$. 
	
	 Finally note that $n \not\equiv 0 \mod \ell_i$ for all $i \in I$. From Lemma
\ref{rho_description} we then find that $\rho_g(n) >0$.  The result now follows upon noting that there are infinitely many $n \in \mathbb{N}$ of the form in ~\eqref{eq.natural_numbers}. 
\end{proof}

\section{A Heuristic Interpretation}\label{Heuristic_Interpretation}
Artin arrived at the quantitative version of his primitive root conjecture using a well-known heuristic concerning the splitting properties of primes across the fields $\Q(\zeta_{\ell},g^{1/\ell})$, for varying primes $\ell \in \mathbb{N}$.  The classical correction factor $c_{g} \in \Q$ emerges upon taking into account relevant \textit{entanglements} between number fields of this form.  In this section we suggest an analogous heuristic, which interprets the correction factor $\rho_{g}(n) \in \Q$ in terms of splitting properties of primes in $K$.  In contrast to the classical setting, we manage to obtain the appropriate correction factor while still preserving the assumption that the various splitting conditions are independent, across the primes $\ell \in \mathbb{N}$.

In what follows, we restrict ourselves to the case in which $K=\fq(X)$ is a global function field, i.e. in which $X$ is a normal geometrically integral projective curve over $\fq$.  A \textit{prime} $P$ of $K$ is, by definition, a discrete valuation ring $O_{P}$ with maximal ideal $P$, such that $\fq \subset O_{P}$ and $\textnormal{Frac}(O_{P})= K$.
Since $K$ is a global function field, the stalk $\mathcal{O}_{X,\p}$ at each closed point $\mathfrak{p} \in X$ is a discrete valuation ring, and such stalks are in fact in $1:1$ correspondence with the primes in $K$ \cite[pg. 130]{hartshorne2013algebraic}.  If $L/K$ is a field extension then a prime $\mathfrak{P}$ of $L$ is said to \textit{lie above} $P$ (denoted $\mathfrak{P}|P$) if $O_{\mathfrak{P}} \cap K = O_P$.  In such a case, $O_{\mathfrak{P}}/\mathfrak{P}$ forms a vector space over $O_{P}/P$ of finite degree, and we refer to $f_{\mathfrak{P}/P}:=[O_{\mathfrak{P}}/\mathfrak{P}:O_{P}/P]$ as the \textit{residual degree} of $\mathfrak{P}$ over $P$.  We moreover find that $P O_{\mathfrak{P}} = \mathfrak{P}^{e_{\mathfrak{P}/P}}$, for some $e_{\mathfrak{P}/P} \in \Z_{>0}$, which we refer to as the \textit{ramification index} of $\mathfrak{P}$ over $P$.  We say that $P$ \emph{splits completely} in $L$ if $g = [L:K]$, i.e. if $f_{\mathfrak{P}_{i}/P}= e_{\mathfrak{P}_{i}/P}=1$ for all primes $\mathfrak{P}_{i}$ in $L$ lying above $P$.
Let $\{\mathfrak{P}_{1},\dots,\mathfrak{P}_{h(P)}\}$ denote the complete set of primes in $L$ lying above $P$.  By ~\cite[Proposition 7.2]{rosen2002number}, we then find that
\[\sum_{i=1}^{h(P)}f_{\mathfrak{P}_{i}/P}\cdot e_{\mathfrak{P}_{i}/P} = [L:K].\]

Let $g \in K \setminus \fq$.  Let $\mathfrak{p} \in X$ be a closed point such that $g$ is regular at $\mathfrak{p}$. 
Such $g \in K$ then fails to be primitive modulo $\mathfrak{p}$ if and only if the prime $P$ corresponding to $\mathfrak{p}$
splits completely in $K_\ell := K(\sqrt[\ell]{g}, \zeta_\ell)$, the splitting field of $X^\ell-g$, for some prime $\ell \in \mathbb{N}$, where $\ell \nmid q$
 \cite[Lemma 10.1 and Proposition 10.6]{rosen2002number}. We may therefore formulate a heuristic for the density of primes $P$ of degree $n$ for which $g$ is a primitive root by understanding the density of primes $P$ in $K$ which split completely in $K_\ell$, for each prime $\ell \in \mathbb{N}$.

Suppose $g$ is not a full $\ell^{th}$ power, for any prime $\ell \mid q-1$.  Otherwise, $N_{X}(g,n)=0$, trivially.  Given a prime $\ell \in \mathbb{N}$, let
 \[\mathbb{P}_\ell := \mathbb{P}(P \text{ splits completely in } K_\ell \mid \deg(P) = n ).\]
Under the heuristic assumption that the splitting conditions of $P$ in $K_{\ell}$ are independent for the various fields $K_\ell$, we expect the desired density to be given by
\[
A = \prod_\ell \left(1- \mathbb{P}_\ell \right).
\]
Note that $\ell \mid q^n-1$ if and only if $P$ splits completely in $K(\zeta_\ell)$ (cf.~\cite[Proposition 10.2]{rosen2002number}).  Thus $\ell \nmid q^n-1$ implies that $P$ does not split completely in $K_{\ell}$, i.e. $\mathbb{P}_\ell = 0$ for all $\ell \nmid q^n-1$, and thus
\[
A = \prod_{\ell \mid q^n-1} \left(1- \mathbb{P}_\ell \right).
\]
Note that when $\ell \mid q^n-1$ then again by~\cite[Proposition 10.2]{rosen2002number} we find that
\[
\mathbb{P}_\ell = \mathbb{P}(P \text{ splits completely in } K_\ell \mid \deg(P) = n, P \text{ splits completely in } K(\zeta_\ell)).
\]
Now suppose $P$ splits completely in $K(\zeta_\ell)$, i.e. that $f_{\p/P} = e_{\p/P}= 1$ for every prime $\mathfrak{p}$ in $K(\zeta_{\ell})$ lying above $P$.  Let $\mathfrak{P}$ denote a prime in $K_{\ell}$ lying above some such $\mathfrak{p}|P$.  Since the residual degree and ramification index behave transitively in towers of field extensions, we find that
\[f_{\mathfrak{P}/P} = f_{\mathfrak{P}/\p}\cdot f_{\p/P} = f_{\mathfrak{P}/\p}\]
and similarly that
\[ 
e_{\mathfrak{P}/P} = e_{\mathfrak{P}/\p}\cdot e_{\p/P} = e_{\mathfrak{P}/\p},\]
for any such $\mathfrak{P}|\p$. In particular, we find that $f_{\mathfrak{P}/P}=e_{\mathfrak{P}/P}=1$ for every prime $\mathfrak{P}|P$ in $K_{\ell}$ if and only if $f_{\mathfrak{P}/\p} = e_{\mathfrak{P}/\p} = 1$ for every prime $\mathfrak{P}$ in $K_{\ell}$ lying above some such $\p|P$.  It follows that $P$ splits completely in $K_{\ell}$ if and only if every prime $\mathfrak{p}|P$ in $K(\zeta_{\ell})$ splits completely in $K_{\ell}$.\\
\\
Suppose $P$ is a prime in $K$ of degree $n$, such that $P$ splits completely in $K(\zeta_{\ell})$.  Note that since $f_{P/\p}=1$, we find that 
$[O_{\p}/\p:O_{P}/P]=1$ and therefore that
\[[O_{\p}/\p:\fq(\zeta_{\ell})]\cdot [\fq(\zeta_{\ell}):\fq] = [O_{\p}/\p:\fq]= [O_{P}/P:\fq].\]
It follows that
\[[O_{\p}/\p:\fq(\zeta_{\ell})]=\frac{[O_{P}/P:\fq]}{[\fq(\zeta_{\ell}):\fq]}=\frac{n}{\phi_{q}(\ell)},\]
where $\phi_{q}(\ell) := [\fq(\zeta_{\ell}):\fq]$ is equal to the multiplicative order of $q \mod \ell$ ~\cite[Proposition 10.2]{rosen2002number}.  In other words, we find that $P$ splits completely into $\phi_{q}(\ell)$ primes  $\p \subset K(\zeta_{\ell})$, such that $\deg \p:=[O_{\p}/\p:\fq(\zeta_{\ell})] = n/\phi_{q}(\ell)$
for each such $\p$.  By the \textit{prime polynomial theorem} ~\cite[Theorem 5.12]{rosen2002number}, we thus find that

\[\sum_{\substack{P \subset K \\ \deg P = n}} \# \left\{\p \subseteq K(\zeta_{\ell})
: \p|P  \right\} = \phi_{q}(\ell)\frac{q^{n}}{n}+O\left(q^{\frac{n}{2}}\right).\]

Note, moreover, that the constant field of $K(\zeta_{\ell})$ is equal to $\mathbb{F}_{q^{\phi_{q}(\ell)}}$.  Again from the prime polynomial theorem it then follows that

\[\# \left\{\p \subseteq K(\zeta_{\ell}): \deg \p = \frac{n}{\phi_{q}(\ell)} \right\} = \frac{q^{\phi_{q}(\ell)\frac{n}{\phi_{q}(\ell)}}}{n/\phi_{q}(\ell)}+O\left(q^{\frac{n}{2}}\right) = \phi_{q}(\ell)\frac{q^{n}}{n}+O\left(q^{\frac{n}{2}}\right).\]

We may thus conclude that

\[
\mathbb{P}_\ell = \mathbb{P}\left(\p \subseteq K(\zeta_{\ell})\text{ splits completely in } K_\ell \big \vert \deg \p = \frac{n}{\phi_{q}(\ell)}\right).
\]
Note that if $g$ is geometric at $\ell$ then $K_\ell/K(\zeta_\ell)$ is a geometric extension.  In such a case, we may apply \textit{Chebotarev's density theorem}~\cite[Theorem 9.13B]{rosen2002number}  to establish that the desired density is given by
\[
\mathbb{P}_\ell 
= \frac{1	}{[K_\ell:K(\zeta_\ell)]} = \frac{1}{\ell}.
\]
If $g$ is not geometric at $\ell$, then we may no longer apply Chebatarev's density theorem.  In such a case, however, we have sufficient information to compute $\mathbb{P}_\ell$ precisely. By Lemma~\ref{le.equivalent_characterisations_non_geometric}, we write $g = \mu b^\ell$ where $\mu \in \fq^\times$.  Let $r$ denote the order of $\mu$ in $\fq^\times$.  We may then find a generator $\zeta$ of $\fq^\times$ such that $\mu = \zeta^{\frac{q-1}{r}}$.  Indeed, suppose $\mu = \zeta_{0}^{\frac{q-1}{r}b_{0}}$ for a given generator $\zeta_{0}$, and where $(b_{0},r)=1$.  By the Chinese remainder theorem, we may find a   
$b \in \{b_{0}+kr: k \in \Z\}$ such that 
$(b,q-1)=1$.  We then write $\mu = \zeta^{\frac{q-1}{r}}$, where $\zeta = \zeta_{0}^{b}$ generates $\fq^{\times}$.

  By~\cite[Proposition 10.6]{rosen2002number} we find that a prime $P$ which splits completely in $K(\zeta_\ell)$ also splits completely in $K_\ell$, if and only if
\[
g^{\frac{q^n-1}{\ell}} \equiv \zeta^{\frac{q-1}{r} \cdot \frac{q^n-1}{\ell}} b^{\frac{q^n-1}{\ell} \cdot \ell} \equiv \zeta^{\frac{q-1}{r} \cdot \frac{q^n-1}{\ell}}\equiv 1 \mod P.
\]
Note that this in turn occurs if and only if $q-1 \mid \frac{q-1}{r} \cdot \frac{q^n-1}{\ell}$, enabling us to conclude that 
$$
\mathbb{P}_{\ell} = \left\{\begin{array}{ll}
1 & \text { if } \ell \mid \frac{q-1}{r} (q^{n-1} + \cdots + 1)\\
0 & \text { otherwise}.
\end{array}\right.
$$
In particular, since $\ell \mid q^{n}-1 = (q-1)(q^{n-1} + \cdots + 1),$ we find that 
$\mathbb{P}_{\ell} =1$ whenever $\ell \nmid q-1$.

So suppose $\ell \mid q-1$.  If $\ell \mid \frac{(q-1)}{r}$, then $\mu = \zeta^{\frac{q-1}{r}}$ is an $\ell^{th}$ power, in which case $g$ is also an $\ell^{th}$ power, contradicting our initial assumption.  We may therefore assume that $\ell \nmid \frac{(q-1)}{r}$.  In this case,
$\mathbb{P}_\ell = 0$ if and only if $\ell \nmid (q^{n-1} + \cdots + 1)$.  Since $q \equiv 1 \mod \ell$, we find that

\[
q^{n-1} + \cdots + 1 \equiv n \mod \ell,
\]
so that 

$$
\mathbb{P}_{\ell} = \left\{\begin{array}{ll}
1 & \text { if } n \equiv 0 \mod \ell\\
0 & \text { if } n \not \equiv 0 \mod \ell.
\end{array}\right.
$$
We thus conclude as follows.  Suppose $g$ is not a full $\ell^{th}$ power for any prime $\ell \mid q-1$.  If, for all primes $\ell \in \mathscr{P}_g$ such that $\ell \mid q^n-1$, we have $\ell \mid q-1$ and $n \not\equiv 0 \mod \ell$, then the density is given by
\[
A = \prod_{\substack{\ell \mid q^n-1 \\ \ell \notin \mathscr{P}_g}} \left( 1- \frac{1}{\ell}\right) \prod_{\substack{\ell \mid q^n-1 \\ \ell \in \mathscr{P}_g}} 1.
\]
Otherwise $A = 0$. In either case, $A$ is then given by
\begin{align*}
A &= \prod_{\substack{\ell \mid q^n-1}} \left( 1- \frac{1}{\ell} \right) \prod_{\substack{\ell \mid q^n-1 \\ \ell \in \mathscr{P}_g}} \left( 1- \frac{c_\ell\left(q^{n-1} + q^{n-2} + \cdots + 1\right)}{\varphi(\ell)} \right) = \frac{\varphi(q^n-1)}{q^n-1} \rho_g(n),
\end{align*}
as expected.

\bibliography{artin_arxiv2.bib}
\bibliographystyle{plain}

\end{document}